\newtheorem{theorem}{\textbf{Theorem}}[section]
\newtheorem{proposition}[theorem]{\textbf{Proposition}}
\newtheorem{corollary}[theorem]{\textbf{Corollary}}
\theoremstyle{definition}
\newtheorem{definition}[theorem]{\textbf{Definition}}
\theoremstyle{remark}
\newtheorem{example}[theorem]{Example}
\newtheorem{remark}[theorem]{Remark}
\newcommand{\cP}{\mathcal{P}}
\newcommand{\R}{{\mathbb R}}
\newcommand{\N}{{\mathbb N}}
\newcommand{\supp}[1]{\operatorname{supp}#1}
\let\inf\undefined
\DeclareMathOperator*{\inf}{inf\vphantom{p}}
\numberwithin{equation}{section}
\title[Superposition principle and schemes for MDEs]{Superposition principle and schemes for\\
Measure Differential Equations}
\author[F. Camilli]{Fabio Camilli}
\address{\hspace{-0.5em}\begin{tabular}{ll}Fabio Camilli:&``Sapienza'' Universit{\`a}  di Roma,\\&Dipartimento di Scienze di Base e Applicate per l'Ingegneria \\& 
Via Scarpa 16, I-00161 Rome, Italy.\end{tabular}}
\email{camilli@sbai.uniroma1.it}
\author[G. Cavagnari]{Giulia Cavagnari $^*$}
\address{\hspace{-0.5em}\begin{tabular}{ll}Giulia Cavagnari:& Politecnico di Milano,\\&Dipartimento di Matematica ``F. Brioschi''\\& 
Piazza Leonardo da Vinci, 32, I-20133 Milano, Italy.\end{tabular}}
\email{giulia.cavagnari@polimi.it}
\author[R. De Maio]{Raul De Maio}
\address{\hspace{-0.5em}\begin{tabular}{ll}Raul De Maio:&``Sapienza'' Universit{\`a}  di Roma,\\&Dipartimento di Scienze di Base e Applicate per l'Ingegneria \\& 
Via Scarpa 16, I-00161 Rome, Italy.\end{tabular}}
\email{raul.demaio@uniroma1.it}
\author[B. Piccoli]{Benedetto Piccoli}
\address{\hspace{-0.5em}\begin{tabular}{ll}Benedetto Piccoli:& Rutgers University - Camden,\\&Department of Mathematical Sciences\\ &311 N. 5th Street
Camden, NJ 08102, USA.\end{tabular}}
\email{piccoli@camden.rutgers.edu}
\keywords{Measure differential equations, superposition principle, measure-valued solution, probability vector fields}
\subjclass[2010]{35S99, 35F20, 35F25, 28A50}
\thanks{$^*$ Corresponding author: Giulia Cavagnari, \texttt{giulia.cavagnari@polimi.it}}
\begin{document}

\begin{abstract}
Measure Differential Equations (MDE) describe	the evolution of probability measures driven by probability  velocity fields, i.e. probability measures on the tangent bundle.  They are, on one side, a measure-theoretic generalization of ordinary differential equations; on the other side, they allow to describe  concentration and diffusion phenomena typical of kinetic equations. In this paper, we analyze some  properties of this  class of differential equations, especially highlighting their link with nonlocal continuity equations. We prove a representation result in the spirit of the Superposition Principle by Ambrosio-Gigli-Savar\'e, and we provide alternative schemes converging to a solution of the MDE, with a particular view to uniqueness/non-uniqueness phenomena.
\end{abstract}

\maketitle

\section{Introduction}
The theory of Measure Differential Equations (MDE in brief) has been recently introduced in  \cite{Piccoli}. A Cauchy problem for a  MDE is given by  
\begin{equation}\label{MDE_intro}
\left\{
\begin{array}{ll}
\dot \mu_t = V[\mu_t],\\
\mu_{t=0} = \mu_0,
\end{array}
\right.
\end{equation}
where   $\mu_0\in\cP(\R^d)$, the space of probability measures on $\R^d$,
and $V$ is a  probability vector field  (PVF in brief), i.e. a map  assigning to a probability measure $\mu\in\cP(\R^d)$  a (Young) probability measure $V[\mu]$ on the tangent bundle $T\R^d=\R^d\times\R^d$ such that the first projection of $V[\mu]$ is the base measure $\mu$ itself.

MDEs can be seen as a generalization of Ordinary Differential Equations (ODEs) to the space of measures compatible with the natural map associating to every point of $\mathbb R^d$ (more generally, a manifold) the Dirac measure centered at the same point. 
More precisely, in the particular case when $V[\mu]$ is concentrated on a graph, i.e. 
\begin{equation}\label{Vv}
V[\mu]=\mu \otimes \delta_{v(x)}
\end{equation}
for a  given Borel measurable vector field $v:\mathbb R^d\to\mathbb R^d$, then \eqref{MDE_intro} is equivalent to the continuity equation driven by $v$,
\begin{equation*}
\left\{
\begin{array}{ll}
\partial_t\mu_t+\mathrm{div}(v\mu_t)=0,\\
\mu_{t=0} = \mu_0,
\end{array}
\right.
\end{equation*}
showing also a first immediate link between MDEs and the continuity equation framework.
As proved in \cite{Piccoli}, Lipschitz continuity of $v$ guarantees existence and uniqueness of a Lipschitz semigroup of solutions to the corresponding MDE, which is linked to existence and uniqueness of (distributional) solutions of the corresponding transport/continuity equation.

The study of  linear and nonlinear transport equations, in the framework of weak measure solutions, has received a lot of attention in the recent time     (see \cite{AGS,CDMT1,CDMT2,ccr,rainbow,CMPbulgaro,PiccoliBOOK}). This theory is indeed relatively flexible to describe a large variety of phenomena,  as a continuum model for interacting particle systems (see e.g. \cites{G,J} for the derivation of mean-field equations as limit of interacting $N$-particles dynamics). The MDE approach can be seen as a further generalization of this technique, when the uncertainty affects not only the position of the particles, but also the law governing their evolution. As will be proved in Section \ref{sec:SP}, MDEs are in fact an alternative language to describe phenomena that fits into the study of nonlocal continuity equations, where the macroscopic evolution of the state of the system is ruled by a vector field that depends on the evolving state itself. In fact, in \cite{Piccoli} it is shown how MDEs can provide a unified model to study phenomena such as finite-speed diffusion and concentration (the latter using one-sided Lipschitz conditions), including conservation laws with discontinuous fluxes which may generate delta waves \cite{poupaud-rascle}. Finally MDE theory allows to prove mean-field type results for multi-particle systems.
For all these reasons, models aimed at describing collective motion \cites{ccr,VZ,BB}, such as pedestrian traffic \cite{PiccoliBOOK}, vehicular traffic \cite{CDMT1} or general multi-particle systems are natural applications of this study.
Moreover, a further development of the theory on MDEs in presence of a source term has been recently provided in \cite{MDE_PR}.

We recall that existence of weak measure solutions  to \eqref{MDE_intro} has been proved in  \cite{Piccoli} by means of an approximation scheme, called Lattice Approximate Solutions (LAS in brief). The scheme is obtained by discretizing the equation  in space, time and velocity  and moving convex combinations of Dirac masses  through the resulting discrete dynamical system. 

Aim of this paper is to provide a further analysis of \eqref{MDE_intro} to better understand certain properties regarding the solutions of the problem.  The first result is an equivalence relation with a nonlocal continuity equation dynamics, stated in Proposition \ref{prop:equivMDE-CE}, then we give an extension  of the Superposition Principle  by Ambrosio-Gigli-Savar\'e in the context of MDEs. We will provide a representation result for a solution of a MDE, similarly to what occurs for continuity equations with a local vector field (see \cite[Theorem 8.2.1]{AGS}), characterizing a (possibly not unique) solution of \eqref{MDE_intro} with a superposition of integral curves coming from a suitable underlying particle system.  In the same spirit, we  also provide a consistent probabilistic representation for the LAS scheme in \cite{Piccoli}.

In the second part of the paper, we consider alternative schemes converging to a weak solution of the MDE.
We first define a semi-discrete in time Lagrangian scheme for \eqref{MDE_intro}
and we prove that, up to subsequences, it converges to the same limit of the LAS scheme.
Moreover, we introduce another semi-discrete in time scheme obtained by taking the barycenter of the PVF at each time step, before moving the mass.
We show with an example that  this mean velocity scheme may converge to a different weak solution of \eqref{MDE_intro} with respect to the  LAS/Lagrangian schemes. This fact highlights the weak framework of the MDE theory, in what concerns uniqueness of solutions 
which, in general, is not expected (see Remark \ref{lack_uni} and the examples provided in Section \ref{sec:ex}). However, up to restrict the study to the class of solutions that can be obtained as limits of LASs (or with the semi-discrte in time Lagrangian scheme, see Corollary \ref{cor:uniqSemiLagr}), in \cite[Section 5]{Piccoli} the author prove the uniqueness of a Lipschitz semigroup associated to \eqref{MDE_intro} by prescribing the evolution of convex combinations of  Dirac measures for a small initial time.

The paper is organized as follows: in Section \ref{sec:prelim} we give some preliminaries on optimal transport and measure theory, recalling the MDE setting and the definition of the LAS scheme introduced in \cite{Piccoli}; in Section \ref{sec:SP} we highlight the correspondences between weak solutions of MDE and (distributional) solutions of a suitable continuity equation, we thus exploit a Superposition Principle for MDEs and a probabilistic representation construction for the LASs; in Section \ref{sec:SDS} we provide a Lagrangian approximation scheme; in Section \ref{sec:MVP}, we present another approximating scheme converging to a different solution of \eqref{MDE_intro} and finally, in Section \ref{sec:ex}, we discuss some clarifying examples, where we make use of the previously defined schemes and of what analyzed in Section \ref{sec:SP}.

\section{Preliminaries and first results}\label{sec:prelim}
We recall some preliminary definitions and results
(we  address the reader to \cites{AGS,Santa,Villani} as relevant resources regarding optimal transport and measure theory).
Given a complete separable metric space $X$, we denote by $\cP(X)$  the set of Borel probability measures on $X$, by  $\cP_p(X)$  the subset of $\cP(X)$ whose elements have finite $p$-moment and by $\cP_c(X)$  the subset of $\cP_p(X)$ whose elements have compact support.  We endow the set $\cP_p(X)$ with  the $p$-Wasserstein distance $W^X_p$ which makes $\cP_p(X)$ a complete separable metric space. On $\cP_c(X)$, we consider the metric $W_1^X$. In the case $p=1$, we recall a special duality formula, called \emph{the Kantorovich-Rubinstein duality}
\begin{equation*}
W^X(\mu,\nu)=\sup\left\{\int_X f\,d(\mu-\nu)\,:\,f:X\to\R,\,\mathrm{Lip}(f)\le 1\right\}.
\end{equation*}

Referring e.g. to \cite[Section 5.2]{AGS} for the definition of push-forward, $r\sharp\mu$, of a probability measure $\mu\in\cP(X)$ through a Borel map $r:X\to Y$, in the following we recall an important disintegration result (see \cite[Theorem 5.3.1]{AGS}).

\begin{theorem}[Disintegration]\label{thm:disintegr}
Let $\boldsymbol X,X$ be complete separable metric spaces, $\boldsymbol\mu\in\cP(\boldsymbol X)$ and $r:\boldsymbol X\to X$ be a Borel map. Then there exists a $r\sharp\boldsymbol\mu$-a.e. uniquely determined Borel family of probability measures $\{\mu_x\}_{x\in X}\subset\cP(\boldsymbol X)$ such that $\mu_x(\boldsymbol X\setminus r^{-1}(x))=0$ for $r\sharp\boldsymbol\mu$-a.e. $x\in X$. Furthermore
\[\int_{\boldsymbol X}f(z)\,d\boldsymbol\mu(z)=\int_X\int_{r^{-1}(x)}f(z)\,d\mu_x(z)\,d(r\sharp\boldsymbol\mu)(x),\]
for any bounded Borel map $f:\boldsymbol X\to\mathbb{R}$. We will write $\boldsymbol\mu=(r\sharp\boldsymbol\mu)\otimes\mu_x$.
\end{theorem}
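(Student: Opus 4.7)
The plan is to reduce the disintegration to a canonical fibered setting and then construct the conditional probabilities via Radon-Nikodym densities, extending them from a countable generating algebra using tightness of Borel measures on Polish spaces.

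First, I would eliminate the generality of $r$ by passing to its graph. Define the Borel map $\iota:\boldsymbol X\to X\times\boldsymbol X$, $\iota(z):=(r(z),z)$, and set $\boldsymbol\nu:=\iota\sharp\boldsymbol\mu$. Since $\boldsymbol\nu$ is concentrated on $\{(x,z)\colon r(z)=x\}$ and $(\pi_X)\sharp\boldsymbol\nu=r\sharp\boldsymbol\mu$, a disintegration of $\boldsymbol\nu$ with respect to the first projection would yield fibered measures $\boldsymbol\nu_x$ automatically concentrated on the correct fibers; then $\mu_x:=(\pi_{\boldsymbol X})\sharp\boldsymbol\nu_x$ inherits concentration on $r^{-1}(x)$ and the integration formula for $\boldsymbol\mu$ follows from that for $\boldsymbol\nu$ via a change of variables. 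This reduces the problem to the case in which $r$ is the first projection between two Polish spaces.

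Second, I would build $\mu_x$ on a countable skeleton. With $\sigma:=r\sharp\boldsymbol\mu$, for each Borel $B\subset\boldsymbol X$ the set function $A\mapsto\boldsymbol\mu(r^{-1}(A)\cap B)$ is a finite measure on $X$ dominated by $\sigma$; let $g_B\in[0,1]$ denote its Radon-Nikodym density, uniquely defined $\sigma$-a.e. Fix a countable algebra $\mathcal A$ generating the Borel $\sigma$-algebra of $\boldsymbol X$ (available by separability) and a countable basis of open sets of $X$. Each of the countably many identities to be imposed --- finite additivity on disjoint pairs from $\mathcal A$, the normalisation $g_{\boldsymbol X}\equiv 1$, and the concentration relation $g_{r^{-1}(U)}=\chi_U$ for each basic open $U$ --- holds $\sigma$-a.e., so discarding a single $\sigma$-null set I can arrange that off that set the assignment $B\mapsto g_B(x)$ is a finitely additive, unit-mass, fiber-concentrated set function on $\mathcal A$.

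The hard part is the next step: extending $\mu_x$ from $\mathcal A$ to a Borel probability measure on $\boldsymbol X$ simultaneously for $\sigma$-almost every $x$. Here I would invoke inner regularity, since Borel probabilities on the Polish space $\boldsymbol X$ are tight: for each $B\in\mathcal A$ and each rational $\varepsilon>0$, pick a compact $K_{B,\varepsilon}\subset B$ with $\|g_B-g_{K_{B,\varepsilon}}\|_{L^1(\sigma)}<\varepsilon$ and enforce the countably many resulting relations off one further $\sigma$-null set. This forces $B\mapsto g_B(x)$ to be countably additive on $\mathcal A$, hence extendable by Carath\'eodory to a Borel probability $\mu_x$. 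Borel measurability of $x\mapsto\mu_x(B)$ for all Borel $B$ then follows from that of each $g_B$ via a monotone class argument, and the integration formula extends from indicators of the form $\chi_{r^{-1}(A)\cap B}$ --- where it holds by construction of $g_B$ --- to bounded Borel $f$ by linearity and monotone convergence. Uniqueness is obtained by testing the formula against $f(z):=h(r(z))\chi_B(z)$ for arbitrary bounded Borel $h$ and $B\in\mathcal A$: this forces $\mu_x(B)=\mu_x'(B)$ $\sigma$-a.e.\ for each such $B$, and countable generation upgrades this to $\sigma$-a.e.\ agreement of the two families as measures. Everything outside this extension step reduces to routine Radon-Nikodym and monotone class manipulations; the Polish hypothesis enters the proof exclusively through the tightness used to promote finite to countable additivity.
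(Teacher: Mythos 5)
The paper does not prove this statement at all: it is recalled verbatim as a preliminary, with the proof delegated to the citation (Theorem 5.3.1 of Ambrosio--Gigli--Savar\'e, which in turn reduces to the product case and invokes the existence of regular conditional probabilities from Dellacherie--Meyer). So there is nothing in the paper to compare against line by line; what you have written is, in substance, the classical proof that sits behind that citation. Your reduction via the graph map $\iota(z)=(r(z),z)$, the Radon--Nikodym construction of $g_B$ on a countable generating algebra, the derivation of fiber concentration from $g_{r^{-1}(U)}=\chi_U$ over a countable basis, the monotone class argument for measurability and for the integration formula, and the uniqueness argument by testing against $f(z)=h(r(z))\chi_B(z)$ are all correct and standard.

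The one step you should tighten is the promotion from finite to countable additivity. As stated, the bound $\|g_B-g_{K_{B,\varepsilon}}\|_{L^1(\sigma)}<\varepsilon$ does not by itself give, for $\sigma$-a.e.\ fixed $x$, the pointwise inner approximation $\sup_K g_K(x)=g_B(x)$ that the compact class (Marczewski) criterion requires; you should instead fix, for each $B\in\mathcal A$, an increasing sequence of compacts $K_n\subset B$ with $\boldsymbol\mu(B\setminus K_n)\to 0$, so that $g_{K_n}\uparrow g_B$ $\sigma$-a.e.\ by monotone convergence of the densities. More importantly, the compactness argument (finite intersections of the chosen compacts are nonempty whenever they carry positive $g$-mass) needs finite additivity and monotonicity of $B\mapsto g_B(x)$ not only on $\mathcal A$ but on the countable family consisting of $\mathcal A$, the chosen compacts, and their finite intersections; these are still only countably many a.e.\ identities, so they can all be enforced off a single $\sigma$-null set, but they must be listed explicitly --- ``the countably many resulting relations'' is doing real work there. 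With that repair the argument is complete, and the Polish hypothesis indeed enters only through tightness, exactly as you say.
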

\begin{remark}\label{rmk:disintegr}
As pointed out in \cite[Section 5.3]{AGS}, if $\boldsymbol X=X\times Y$ and $r^{-1}(x)\subseteq \{x\}\times Y$ for all $x\in X$, then we can identify each measure $\mu_x\in\cP(X\times Y)$ with a measure defined on $Y$. We will make a strong use of this result throughout the paper.
\end{remark}

We recall now the definition of convolution between measures and product with a coefficient $a\in\R$. We denote with $\chi_A$ the characteristic function of $A\subseteq\R^d$.

\begin{definition}[Convolution]\label{def:conv}
We define \emph{the convolution operator} $\oplus:\cP(\mathbb R^d)\times\cP(\mathbb R^d)\to\cP(\mathbb R^d)$ by $(\mu\oplus\nu)(B):=\int_{\R^d\times\R^d}\chi_B(x+y)\,d\nu(y)\,d\mu(x)$, for any Borel set $B\subseteq\R^d$. Equivalently we may define
\[\langle\mu\oplus\nu,f\rangle:=\int_{\R^d\times\R^d}f(x+y)\,d\nu(y)\,d\mu(x),\]
for any $\mu\oplus\nu$-integrable Borel function $f:\R^d\to\R$.\\
We define \emph{the product operator} $\cdot:\R\times\cP(\R^d)\to\cP(\R^d)$, $(a,\mu)\mapsto a\cdot\mu$, by 
\begin{equation}\label{eq:prodDef}
(a\cdot\mu)(B):=\int_{\R^d}\chi_B(ax)\,d\mu(x),
\end{equation}
for any Borel set $B\subseteq\R^d$.
\end{definition}
\begin{remark}
Observe that $\cP_c(\R^d)$ is closed w.r.t. convolution and product operators. In particular, as pointed out in \cite[Section 6.1]{Piccoli} we have that the operation $\oplus$ defines a monoid structure over $\cP_c(\R^d)$.
\end{remark}

Before setting the theory to study Measure Differential Equations, let us recall briefly the definition of solution for a continuity equation (see e.g. \cite[Section 8.1]{AGS}). These two notions will be compared in Section \ref{sec:SP}.
\begin{definition}[Continuity equation]
Given $T>0$, a Borel family of probability measures $\boldsymbol{\mu}=\{\mu_{t}\}_{t\in [0,T]}\subseteq\mathcal P(\mathbb R^d)$ and
a Borel map $w:[0,T]\times\mathbb R^d\to\mathbb R^d$, $w_t(x)=w(t,x)$, such that
$\displaystyle\int_{\mathbb R^d}|w_t(x)|\,d\mu_t(x)$ belongs to $L^1([0,T])$,
we say that $\boldsymbol{\mu}$ solves the \emph{continuity equation}
\begin{equation}\label{eq:cont}
\partial_t\mu_t+\mathrm{div}(w_t\mu_t)=0,
\end{equation}
if for every $f\in C^{\infty}_c(\mathbb R^d)$ there holds
\begin{equation}\label{eq:contdistr}
\dfrac{d}{dt}\langle \mu_t,f\rangle=\int_{\mathbb R^d}\left(\nabla f(x)\cdot w_t(x)\right)\,d\mu_t(x),
\end{equation}
in the sense of distributions on $(0,T)$.
\end{definition}

\begin{remark}\label{rmk:uniqNLCE}
We can give a meaning to a solution of \eqref{eq:cont} also when the vector field $w$ is nonlocal depending on the solution itself, i.e. $w:[0,T]\times\mathbb R^d\times\mathcal P(\mathbb R^d)\to\mathbb R^d$, $w(t,x,\mu)=w_t[\mu](x)$. In this case, \eqref{eq:cont} reads
\begin{equation}\label{eq:contNL}
\partial_t\mu_t+\mathrm{div}(w_t[\mu_t]\mu_t)=0,
\end{equation}
to be understood in the distributional sense. Given $\mu_0\in\cP_p(\R^d)$, $p>1$, we recall that if the vector field $w$ is Lipschitz continuous in $(x,\mu)$ uniformly w.r.t. $t$, i.e. there exists $L>0$ such that
\begin{equation}\label{eq:lipwbar}
|w(t,x,\mu^1)-w(t,y,\mu^2)|\le L\left(|x-y|+W^{\R^d}(\mu^1,\mu^2)\right) \quad\forall x,y\in\R^d,\,\forall \mu^1,\mu^2\in \cP_1(\R^d),
\end{equation}
then there exists a unique solution $\boldsymbol\mu\in C([0,T];\cP_1(\R^d))$ to \eqref{eq:contNL} with initial datum $\mu_{t=0}=\mu_0$.
This result is proved and detailed for example in \cite[Theorem 6.1]{Orrieri}. Similar conditions are used in \cite{PR} to prove existence and uniqueness of solutions to \eqref{eq:contNL} with initial datum $\mu_0$, in the class of measures that are absolutely continuous w.r.t. the Lebesgue measure. Under assumptions granting uniqueness, in \cite{PR} the authors construct the solution through converging numerical schemes.
\end{remark}

\subsection{Recalls on Measure Differential Equations}
In this section we recall some basic definitions introduced in \cite{Piccoli} that are at the base of the investigations proposed in this paper. Throughout the work, we denote with $T\R^d=\R^d\times\R^d$ the tanglent bundle of $\R^d$ keeping in mind that when $(x,v)\in T\R^d$, then $x$ should be interpreted as a position while $v$ as a velocity. We denote with $\pi^1:T\R^d\to\R^d$ the projection to the first component, i.e. $\pi^1(x,v)=x$.

\begin{definition} 
A \emph{probability vector field} (PVF) is a map $V:\cP(\R^d)\to\cP(T\R^d)$ consistent with the projection on the first component, i.e. $\pi_1\sharp V[\mu]=\mu$.
\end{definition}
By Theorem \ref{thm:disintegr} and Remark \ref{rmk:disintegr}, we can write $V[\mu]=\mu\otimes\nu_x[\mu]$ for a $\mu$-a.e. uniquely determined Borel family of probability measures $\{\nu_x[\mu]\}_{x\in\R^d}\subseteq\cP(\R^d)$ defined on the fibers $T_x\R^d=\R^d$.\\
Let $T>0$. Given $\mu_0\in\cP(\R^d)$ and a PVF $V$, we consider the following Cauchy problem
\begin{equation}\label{MDE}
\left\{
\begin{array}{ll}
\dot \mu_t = V[\mu_t],\quad t\in(0,T)\\
\mu_{t=0} = \mu_0,
\end{array}
\right.
\end{equation}
where the nonlocal dynamics is called \emph{Measure Differential Equation} (MDE). A solution to this problem has to be interpreted as follows.

\begin{definition}\label{def:sol}
A solution of \eqref{MDE} is a map $\boldsymbol\mu: [0,T] \to \cP(\R^d), t\mapsto\mu_t$ such that $\mu_{t=0} = \mu_0$ and that satisfies for a.e. $t \in [0,T]$
\begin{equation}\label{eq:MDEdistr}
\frac{d}{dt}\langle \mu_t, f\rangle = \int_{T\R^d}(\nabla f(x)\cdot v)\,dV[\mu_t](x,v),
\end{equation}
for any $f\in C^\infty_c(\R^d)$ such that the right-hand side is defined for a.e. $t$, the map $t\mapsto\int_{T\R^d}(\nabla f(x)\cdot v)\,dV[\mu_t](x,v)$ belongs to $L^1([0,T])$, and the map $t\mapsto\int_{\R^d}f\,d\mu_t$ is absolutely continuous. 
Equivalently,
\begin{equation*}
\langle \mu_t - \mu_0, f\rangle = \int_0^t \int_{T\R^d}(\nabla f(x)\cdot v)\,dV[\mu_s](x,v)\,ds,\quad \forall f \in C^\infty_c(\R^d).
\end{equation*}
\end{definition}
\begin{remark}
Notice that, in the trivial case when $V[\mu]=\mu\otimes\delta_{v(x)}$ for some Borel vector field $v:\R^d\to\R^d$, the MDE in \eqref{MDE} reduces to the continuity equation $\partial_t\mu_t+\mathrm{div}\,(v\mu_t)=0$ (see \cite[Section 6]{Piccoli}).
\end{remark}
We stress that $V[\mu]$ is a probability measure on $T\R^d$ where the components of its elements $(x,v)$ represent, respectively, the position and the infinitesimal displacement. We  recall another notion to measure distances between PVFs introduced in \cite{Piccoli}. 
\begin{definition}
Given  $V_i\in\cP_c(T\R^d)$, $i=1,2,$ and denoted by $\mu_i := \pi_1\sharp V_i$ the marginal of $V_i$, we define
\begin{equation*}
\mathcal{W}(V_1, V_2) = \inf\left\{ \int_{T\R^d \times T\R^d} |v - w|dT(x,v,y,w) : T \in \Pi(V_1, V_2), \pi_{13}\sharp T \in \Pi_{opt}(\mu_1, \mu_2) \right\},
\end{equation*}
where $ \Pi(V_1, V_2)$ is the set of all the transference plans from $V_1$ to $V_2$ and $\Pi_{opt}(\mu_1, \mu_2) $ is the set
of the optimal transference plans from $\mu_1$ to $\mu_2$, and $\pi_{13}:(T\R^d)^2\to (\R^d)^2$, $(x,v,y,w)\mapsto (x,y)$.
\end{definition}
The object $\mathcal{W} $ computes  the minimal displacements of the fiber components   assuming that marginals
$\mu_i$ are transported in an optimal way. It is important to notice that $\mathcal{W}$ is not a metric since it can vanish for distinct elements in $\cP_c(T\R^d)$. Moreover, it is easy to verify that 
\begin{equation*}
W^{T\R^d}(V_1, V_2) \leq \mathcal{W}(V_1, V_2) + W^{\R^d}(\mu_1, \mu_2).
\end{equation*}

\medskip

Considering the problem set in $\cP_c(\R^d)$, we recall here the main assumptions required to have existence and convergence of approximation schemes for solutions of an MDE (see \cite{Piccoli}).
\begin{enumerate}
\item[$(H1)$] \textbf{Sublinearity:} there exists a constant $C>0$ such that for all $\mu\in\cP_c(\R^d)$,
\[\sup_{(x,v) \in \text{supp}V[\mu]}|v| \leq C(1 + \sup_{x \in \text{supp}\mu}|x|);\]
\item[$(H2)$] \textbf{Continuity of PVF}: the map $V:\cP_c(\R^d) \to \cP_c(T\R^d) $ is continuous.
\end{enumerate}
As shown in Theorem \ref{thm:convMV} (and in \cite[Theorem 3.1]{Piccoli}) these hypothesis are sufficient to conclude existence of solutions to \eqref{MDE}. When specified, we will require also the following Lipschitz regularity assumption
\begin{enumerate}
\item[$(H3)$] \textbf{Local lipschitzianity in $\mu$-variable}:  $V$ is locally Lipschitz, in particular for every $R>0$ there exists a constant $L=L(R)>0$ such that
\[\mathcal{W}(V[\mu], V[\nu]) \leq L\cdot W^{\R^d}(\mu, \nu),\]
for every $\mu, \nu \in \cP_c(\R^d)$ such that $\supp(\mu),\supp(\nu) \subset B(0,R)$, the open ball of radius $R$ centered in $0\in\R^d$.
\end{enumerate}

\medskip

In the following, we recall the scheme provided in \cite{Piccoli} that has been used in order to prove existence of solutions to \eqref{MDE}. Let us start introducing some notation. 
Let $T>0$. For $N\in\N\setminus\{0\}$, set
\[\Delta_N=\frac{T}{N},\quad \Delta_N^v=\frac{1}{N},\quad \Delta_N^x=\Delta_N^v\,\Delta_N=\frac{T}{N^2}\]
be respectively the time, the velocity and the space-step sizes, noticing that, differently from \cite{Piccoli}, we set the time step size to $T/N$ in place of $1/N$, for our convenience. Considering the corresponding grid in $[0,T]\times[-N,N]^d\times[-TN,TN]^d$, we denote by $x_i$ the discretization points in space, and by $v_i$ the discretization points for the space of velocities. We now build some objects aiming at providing a discrete approximation for $\mu\in\cP_c(\R^d)$ and $V[\mu]\in\cP_c(T\R^d)$ by concentrating the mass on the points of the grid.
Denoting with $Q=([0,\frac{T}{N^2}[)^d$ and $Q'=([0,\frac{1}{N}[)^d$, we define
\[\mathcal A_N^x(\mu):=\sum_i m_i^x(\mu)\,\delta_{x_i},\quad \mathcal A_N^v(V[\mu]):=\sum_i\sum_j m_{ij}^v(V[\mu])\,\delta_{(x_i,v_j)},\]
where $m_i^x(\mu):=\mu(x_i+Q)$ and $m_{ij}^v(V[\mu]):=V[\mu](\{(x_i,v)\,:\,v\in v_j+Q'\})$.
Notice that given $\mu\in\cP_c(\R^d)$, for $N$ sufficiently large we have
\begin{equation}\label{eq:estimLAS}
W^{\R^d}(\mathcal A_N^x(\mu),\mu)\le\Delta_N^x,\quad W^{T\R^d}(\mathcal A_N^v(V[\mu]),V[\mu])\le\Delta_N^v.
\end{equation}

\begin{definition}[Lattice Approximate Solution (LAS)]\label{def:LAS}
Let $V$ be a PVF satisfying $(H1)$. Given $\mu_0\in\cP_c(\R^d)$, $T>0$ and $N\in\N$, the \emph{Lattice Approximate Solution (LAS)} $\boldsymbol\mu^N:[0,T]\to\cP_c(\R^d)$, $t\mapsto\mu_t^N$, is defined, by recursion, as follows
\begin{align*}
\mu_0^N&=\mathcal A_N^x(\mu_0)\\
\mu^N_{k+1}&=\mu^N((k+1)\Delta_N)=\sum_{ij}m_{ij}^v(V[\mu^N(k\Delta_N)])\,\delta_{x_i+\Delta_N\,v_j},
\end{align*}
notice that $\supp(\mu_k^N)$ is contained on the space grid. By time-interpolation we can define $\boldsymbol\mu^N$ for all times as
\begin{equation*}
\boldsymbol\mu^N(k\Delta_N+t)=\sum_{ij}m_{ij}^v(V[\mu^N(k\Delta_N)])\,\delta_{x_i+t\,v_j}.
\end{equation*}
\end{definition}

\medskip

Notice that, thanks to the growth assumption $(H1)$ on $V$ and since $\mu_0\in\cP_c(\R^d)$, then the support of $\mu^N_t$ is uniformly bounded (see \cite[Lemma 3.3]{Piccoli}), i.e. there exists $R=R(T,C,\mu_0)>0$ such that
\begin{equation}\label{eq:growthLAS}
\text{supp}\,\mu^N_t\subset B(0,R), \quad\forall t\in[0,T],	\,\forall N\in\N\setminus\{0\}.
\end{equation}

We address the reader to \cite{Piccoli} for results granting the convergence of the LAS scheme to a solution of \eqref{MDE}.

\medskip

We recall here a definition, used in \cite{Piccoli} to derive the uniqueness of solutions to \eqref{MDE} at the level of the semigroup, when restricting the analysis to a certain class of trajectories. This will be resumed later on in Section \ref{sec:SDS}.

\begin{definition}\label{def:semigroup}
 A Lipschitz semigroup for \eqref{MDE} is a map $S: [0,T] \times \cP_c(\R^d) \to \cP_c(\R^d)$ such that for every $\mu, \eta \in \cP_c(\R^d)$ and $t,s \in [0,T]$ the following holds:
\begin{itemize}
\item[(i)] $S_0\mu =\mu$ and $S_t S_s \mu = S_{t+s}\mu$;
\item[(ii)] the map $t \to S_t \mu$ is a solution of \eqref{MDE};
\item[(iii)] for every $R>0$ there exists $C=C(R)>0$ such that if $\supp\,(\mu), \supp\,(\eta) \subset B(0,R)$ then:
$$
\left\lbrace 
\begin{array}{l}
 \supp\, (S_t \mu) \subset B(0, e^{Ct}(R + 1)),\\
 W^{\R^d}(S_t \mu, S_t\eta) \leq e^{Ct}W^{\R^d}(\mu, \eta),\\
 W^{\R^d}(S_t\mu, S_s\mu) \leq C| t - s |.
\end{array}
\right.$$
\end{itemize}
\end{definition}


\section{A Superposition Principle for MDEs}\label{sec:SP}

We start this section by exploiting the definition of solutions for an MDE comparing it with solutions of a continuity equation. We see that, by definition, in order for $\boldsymbol\mu$ to be a solution of an MDE, it is sufficient to solve a nonlocal continuity equation driven by the barycenter of what $V[\mu]$ prescribes on the fibers $T_x\R^d$ along the solution.
In this sense, MDEs can be viewed as an alternative language to nonlocal continuity equations.
\begin{proposition}\label{prop:equivMDE-CE}
Let $T>0$, $V:\cP(\R^d)\to\cP(T\R^d)$ be a PVF satisfying
\begin{equation}\label{eq:hypintV}
\int_0^T\int_{T\R^d}|v|\,d V[\mu_t](x,v)\,dt<+\infty.
\end{equation}
Then $\boldsymbol\mu: [0,T] \to \cP(\R^d), t\mapsto\mu_t$ is a solution of the MDE $\dot\mu_t=V[\mu_t]$ if and only if $\boldsymbol\mu$ is a solution of the nonlocal continuity equation \eqref{eq:contNL} for a Borel vector field $w:\mathbb R^d\times\mathcal P(\mathbb R^d)\to\mathbb R^d$ defined by
\begin{equation}
w[\mu](x):=\int_{\pi_1^{-1}(x)}v\,d\nu_x[\mu](v),\quad \mu_t\textrm{-a.e. }x\in\mathbb R^d,
\end{equation}
for $\mu$-a.e. $x\in\mathbb R^d$, 
where we denoted with $\nu_x[\mu]$ the disintegration of $V[\mu]$ w.r.t. $\pi_1$.
\end{proposition}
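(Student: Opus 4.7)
The plan is to show that both the MDE distributional identity \eqref{eq:MDEdistr} and the nonlocal continuity equation \eqref{eq:contdistr} coincide term-by-term once one disintegrates $V[\mu_t]$ along its first marginal. In other words, the equivalence is essentially a bookkeeping exercise with the disintegration theorem plus one use of Fubini, with the integrability hypothesis \eqref{eq:hypintV} ensuring that all the objects in sight make sense.

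Concretely, I would start by invoking Theorem \ref{thm:disintegr} and Remark \ref{rmk:disintegr} to write $V[\mu_t]=\mu_t\otimes\nu_x[\mu_t]$ with $\nu_x[\mu_t]\in\cP(\R^d)$. Fix $f\in C^\infty_c(\R^d)$. Since $\nabla f$ is bounded and compactly supported, the function $(x,v)\mapsto\nabla f(x)\cdot v$ is dominated by $\|\nabla f\|_\infty|v|$, so by \eqref{eq:hypintV} it is $V[\mu_t]$-integrable for a.e.\ $t$ with $t$-integrable integral. Fubini-Tonelli then yields
\begin{equation*}
\int_{T\R^d}(\nabla f(x)\cdot v)\,dV[\mu_t](x,v)=\int_{\R^d}\nabla f(x)\cdot\!\left(\int_{\R^d}v\,d\nu_x[\mu_t](v)\right)d\mu_t(x)=\int_{\R^d}\nabla f(x)\cdot w[\mu_t](x)\,d\mu_t(x),
\end{equation*}
which is precisely the right-hand side of \eqref{eq:contdistr} for the choice $w_t(x)=w[\mu_t](x)$.

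Next I would verify the standing integrability requirement in the definition of a solution of \eqref{eq:contNL}, namely $t\mapsto\int|w[\mu_t](x)|\,d\mu_t(x)\in L^1([0,T])$. By Jensen's inequality (applied to the inner integral in the definition of the barycenter) and disintegration,
\begin{equation*}
\int_{\R^d}|w[\mu_t](x)|\,d\mu_t(x)\le\int_{\R^d}\!\int_{\R^d}|v|\,d\nu_x[\mu_t](v)\,d\mu_t(x)=\int_{T\R^d}|v|\,dV[\mu_t](x,v),
\end{equation*}
and the right-hand side lies in $L^1([0,T])$ by \eqref{eq:hypintV}. With this in hand, the equivalence is immediate: if $\boldsymbol\mu$ solves the MDE then \eqref{eq:MDEdistr} together with the identity above gives \eqref{eq:contdistr} with $w=w[\mu_t]$; conversely, reading the identity backwards turns the distributional form of \eqref{eq:contNL} into \eqref{eq:MDEdistr}. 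The absolute continuity of $t\mapsto\langle\mu_t,f\rangle$ and the $L^1$-in-time property of the respective right-hand sides are preserved by the same identity, so they transfer automatically in both directions.

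There isn't really a hard step here: the whole argument is a one-line computation once disintegration is invoked, and the only thing to be careful about is matching the integrability conditions demanded by Definition \ref{def:sol} with those required by Remark \ref{rmk:uniqNLCE}, which is exactly the role of the hypothesis \eqref{eq:hypintV}. The conceptual content of the statement—rather than its proof—is that the MDE only sees $V[\mu]$ through its fiberwise first moment $w[\mu]$ when tested against smooth compactly supported functions, so higher-order information contained in $\nu_x[\mu]$ (spread, concentration, etc.) is invisible at the level of distributional solutions and must be recovered, if at all, by further structural choices such as a specific approximation scheme.
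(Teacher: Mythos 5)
Your proposal is correct and follows essentially the same route as the paper's proof: disintegrate $V[\mu_t]=\mu_t\otimes\nu_x[\mu_t]$, identify the right-hand side of \eqref{eq:MDEdistr} with that of \eqref{eq:contdistr} for $w_t=w[\mu_t]$, and use \eqref{eq:hypintV} to bound $\int_0^T\int_{\R^d}|w[\mu_t]|\,d\mu_t\,dt$. The extra care you take in checking the domination and the transfer of the integrability requirements is a slightly more explicit version of what the paper leaves implicit, but there is no difference in substance.
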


\begin{proof}
The proof follows by disintegration technique. Let us first notice that by \eqref{eq:hypintV}
\begin{align*}
\int_0^T\int_{\mathbb R^d}|w[\mu_t](x)|\,d\mu_t(x)\,dt\le\int_0^T\int_{\mathbb R^d}\int_{\mathbb R^d}|v|\,d\nu_x[\mu_t](v)\,d\mu_t(x)\,dt<+\infty.
\end{align*}
The conclusion comes straightforwardly noting that \eqref{eq:MDEdistr} can be written in terms of \eqref{eq:contdistr} as follows
\begin{align*}
\dfrac{d}{dt}\langle \mu_t,f\rangle&= \int_{T\R^d}(\nabla f(x)\cdot v)\,dV[\mu_t](x,v)\\
&=\int_{\mathbb R^d}\int_{\pi_1^{-1}(x)}(\nabla f(x)\cdot v)\,d\nu_x[\mu_t](v)\,d\mu_t(x)\\
&=\int_{\mathbb R^d}\left(\nabla f(x)\cdot w[\mu_t](x)\right)\,d\mu_t(x).
\end{align*}
\end{proof}

\begin{remark}\label{lack_uni}
As briefly discussed in Remark \ref{rmk:uniqNLCE}, we know that we can guarantee  uniqueness of solutions to the nonlocal continuity equation \eqref{eq:contNL} (with fixed initial datum) under the Lipschitz assumption \eqref{eq:lipwbar} on the driving velocity field $w$.
However, 
when we set the problem in the framework of MDEs, provide such condition is not an easy task, even having the equivalence just proved in Proposition \ref{prop:equivMDE-CE}. Indeed by Proposition \ref{prop:equivMDE-CE}, if we want to have (Lipschitz) continuity of $w$ w.r.t. $x\in\mathbb R^d$, we should assume the disintegration $x\mapsto\nu_x[\mu]$ of the PVF $V[\mu]$ w.r.t. the projection to the base $\pi_1$ to be (Lipschitz) continuous, while in general it is just Borel measurable (see Theorem \ref{thm:disintegr}).

Thus, in general, uniqueness of a weak solution for \eqref{MDE} is not expected (see   \cite[Example 3]{Piccoli}) even under assumptions $(H1)-(H3)$. We will discuss further examples in Section \ref{sec:ex}.
\end{remark}

\medskip

Inspired by the previous result,
we show how to construct a Superposition Principle (see \cite[Theorem 8.2.1]{AGS} for the local continuity equation dynamics) adapted to the language of MDEs. The procedure is similar to the one used in \cite{CMPbulgaro}, where the authors provide a representation result for solutions of a continuity equation associated with Carath\'eodory solutions of a differential inclusion. This result, proved in \cite{CMPbulgaro}, is exploited in   \cite{rainbow}, where the authors study optimal control problems in the space of probability measures with microscopic dynamics ruled, precisely, by a differential inclusion. \\We split the statement into two parts. In the first part, we see that any measure $\boldsymbol\eta\in\cP(\Gamma_{[0,T]})$, concentrated on curves that follow a given PVF $V$ in integral average, generates a solution of the MDE.\\
For $I\subseteq\R$ interval, we denote by $\Gamma_I$  the set of continuous curves from $I$ to $\R^d$ and  by 
$e_t$  the evaluation operator $e_t:\Gamma_I\to\R^d$, $\gamma\mapsto\gamma(t)$, for $t\in I$, while $AC(I;\R^d)$ is the space of absolutely continuous curves from $I$ to $\R^d$. 

\begin{theorem}[Superposition Principle for MDEs - Part I]\label{thm:SP1}
Let $T>0$, $V:\cP(\R^d)\to\cP(T\R^d)$ be a PVF, $\mu_0\in\cP(\R^d)$. Let $\boldsymbol\eta\in\cP(\Gamma_{[0,T]})$ be concentrated on the set of curves $\gamma\in\Gamma_{[0,T]}$ such that the following conditions hold
\begin{itemize}
\item[$(i)$] $\gamma\in AC([0,T];\R^d)$;
\item[$(ii)$] for a.e. $t\in[0,T]$ and for $e_t\sharp\boldsymbol\eta$-a.e. $y\in\R^d$ we have
\[\int_{e^{-1}_t(y)} \dot\gamma(t)\,d\eta_{t,y}(\gamma)=\int_{\pi^{-1}_1(y)} v\,d \nu_y[e_t\sharp\boldsymbol\eta](v),\]
where $\eta_{t,y}$ is the disintegration of $\boldsymbol\eta$ w.r.t. $e_t$, and $\nu_y[e_t\sharp\boldsymbol\eta]$ is the disintegration of $V[e_t\sharp\boldsymbol\eta]$ w.r.t. the projection to the base $\pi_1$;
\item[$(iii)$] $\displaystyle\int_0^T\int_{\Gamma_{[0,T]}}|\dot\gamma(t)|\,d\boldsymbol\eta(\gamma)\,dt<+\infty$ and $\displaystyle\int_0^T\int_{T\mathbb R^d}|v|\,dV[e_t\sharp\boldsymbol\eta](x,v)\,dt<+\infty$.
\end{itemize}
Then, denoted with $\mu_t:=e_t\sharp\boldsymbol\eta$, we have that $\boldsymbol\mu=\{\mu_t\}_{t\in[0,T]}\subseteq\cP(\R^d)$ is a solution of the MDE system \eqref{MDE}.
\end{theorem}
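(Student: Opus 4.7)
The plan is to reduce the statement to checking that $\boldsymbol\mu=\{e_t\sharp\boldsymbol\eta\}_{t\in[0,T]}$ satisfies the distributional identity \eqref{eq:MDEdistr} from Definition \ref{def:sol}. Fix $f\in C^\infty_c(\R^d)$. The idea is to compute $\frac{d}{dt}\langle\mu_t,f\rangle$ by pulling everything back to the path space via $\boldsymbol\eta$, use the absolute continuity from $(i)$ to move the derivative onto $\gamma$, disintegrate $\boldsymbol\eta$ with respect to $e_t$ to identify the inner integral with the barycentric velocity, and finally invoke hypothesis $(ii)$ to recognize the right-hand side of \eqref{eq:MDEdistr}.

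More concretely, the first step is to justify the chain-type identity
\[\langle\mu_t-\mu_0,f\rangle=\int_{\Gamma_{[0,T]}}\bigl(f(\gamma(t))-f(\gamma(0))\bigr)\,d\boldsymbol\eta(\gamma)=\int_{\Gamma_{[0,T]}}\int_0^t\nabla f(\gamma(s))\cdot\dot\gamma(s)\,ds\,d\boldsymbol\eta(\gamma).\]
The first equality is the definition of push-forward, and the second uses $(i)$, namely that $\boldsymbol\eta$-a.e. curve is absolutely continuous, so that $f\circ\gamma$ is absolutely continuous with derivative $\nabla f(\gamma(s))\cdot\dot\gamma(s)$. The integrability bound $|\nabla f(\gamma(s))\cdot\dot\gamma(s)|\leq\|\nabla f\|_\infty|\dot\gamma(s)|$ together with the first part of $(iii)$ allows me to apply Fubini and swap the order of integration, yielding
\[\langle\mu_t-\mu_0,f\rangle=\int_0^t\int_{\Gamma_{[0,T]}}\nabla f(\gamma(s))\cdot\dot\gamma(s)\,d\boldsymbol\eta(\gamma)\,ds.\]
This also shows $t\mapsto\langle\mu_t,f\rangle$ is absolutely continuous on $[0,T]$.

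The second step is to rewrite the inner integral using the disintegration $\boldsymbol\eta=\mu_s\otimes\eta_{s,y}$ with respect to $e_s$ (recall $\mu_s=e_s\sharp\boldsymbol\eta$), so that
\[\int_{\Gamma_{[0,T]}}\nabla f(\gamma(s))\cdot\dot\gamma(s)\,d\boldsymbol\eta(\gamma)=\int_{\R^d}\nabla f(y)\cdot\left(\int_{e_s^{-1}(y)}\dot\gamma(s)\,d\eta_{s,y}(\gamma)\right)d\mu_s(y).\]
Here I crucially use that on $e_s^{-1}(y)$ the value $\gamma(s)=y$ is fixed, so that $\nabla f(\gamma(s))=\nabla f(y)$ can be pulled out of the inner integral. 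Applying hypothesis $(ii)$ for a.e. $s$ and $\mu_s$-a.e. $y$, the inner bracket equals $\int_{\pi_1^{-1}(y)}v\,d\nu_y[\mu_s](v)$, and re-assembling through $V[\mu_s]=\mu_s\otimes\nu_y[\mu_s]$ gives
\[\int_{\R^d}\nabla f(y)\cdot\int_{\pi_1^{-1}(y)}v\,d\nu_y[\mu_s](v)\,d\mu_s(y)=\int_{T\R^d}\nabla f(x)\cdot v\,dV[\mu_s](x,v),\]
which is exactly the right-hand side of \eqref{eq:MDEdistr}. The second part of $(iii)$ guarantees that this is $L^1$ in $s$, so the integrated form in Definition \ref{def:sol} holds for all $t\in[0,T]$, concluding that $\boldsymbol\mu$ solves the MDE.

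The main technical obstacle is the justification of the exchange of derivative/integral and of Fubini, together with the careful handling of the disintegration formula on the fiber $e_s^{-1}(y)$. Both are purely measure-theoretic but depend on having the correct integrability provided by $(iii)$ and on the fact that hypothesis $(ii)$ is stated precisely for $e_t\sharp\boldsymbol\eta$-a.e. base point, matching the measures used in the disintegration. Alternatively, the same computation can be packaged through Proposition \ref{prop:equivMDE-CE}: conditions $(i)$--$(iii)$ together with the above disintegration argument show that $\boldsymbol\mu$ solves the nonlocal continuity equation \eqref{eq:contNL} driven by the barycentric field $w[\mu_t](x)=\int_{\pi_1^{-1}(x)}v\,d\nu_x[\mu_t](v)$, which by that proposition is equivalent to being a solution of the MDE.
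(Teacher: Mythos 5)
Your proposal is correct and follows essentially the same route as the paper: verify the integrability and absolute continuity requirements of Definition \ref{def:sol} from $(i)$ and $(iii)$, then compute the derivative of $t\mapsto\langle\mu_t,f\rangle$ on path space, disintegrate $\boldsymbol\eta$ with respect to $e_t$, and use $(ii)$ to identify the fiber barycenter with that of $V[\mu_t]$. The only cosmetic difference is that you work with the integrated (Fubini) form while the paper differentiates pointwise for a.e.\ $t$; the underlying computation is identical.
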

\begin{proof}
Let us consider any $f\in C^\infty_c(\R^d)$.
First, we check that $\int_{T\R^d}(\nabla f(x)\cdot v)\,d V[e_s\sharp\boldsymbol\eta](x,v)$ is defined for almost every $s\in[0,T]$. Indeed, immediately by hypothesis $(iii)$,
\begin{align*}
\int_{T\R^d}(\nabla f(x)\cdot v)\,d V[e_s\sharp\boldsymbol\eta](x,v)\le\|\nabla f\|_\infty\int_{T\R^d}|v|\,d V[e_s\sharp\boldsymbol\eta](x,v)<+\infty
\end{align*}
for a.e. $s\in[0,T]$.
Thus, we also have that $s\mapsto\int_{T\R^d}(\nabla f(x)\cdot v)\,d V[e_s\sharp\boldsymbol\eta](x,v)$ belongs to $L^1([0,T])$.

Secondly, the map $t\mapsto\int_{\R^d}f\,d\mu_t$ is absolutely continuous. Indeed, for $0\le s<t\le T$ we have
\begin{align*}
\left|\int_{\R^d}f\,d\mu_s-\int_{\R^d}f\,d\mu_t\right|&\le\int_s^t\int_{\Gamma_{[0,T]}}|\nabla f(\gamma(\tau))\cdot\dot\gamma(\tau)|\,d\boldsymbol\eta(\gamma)\,d\tau\\
&\le \int_s^t\int_{\Gamma_{[0,T]}}|\nabla f(\gamma(\tau))|\cdot|\dot\gamma(\tau)|\,d\boldsymbol\eta\,d\tau\\
&\le\|\nabla f\|_\infty\int_s^t\int_{\Gamma_{[0,T]}}|\dot\gamma(\tau)|\,d\boldsymbol\eta\,d\tau<+\infty,
\end{align*}
thanks to hypothesis $(iii)$.

Lastly, for a.e. $t\in[0,T]$,
\begin{align*}
\frac{d}{dt}\int_{\R^d} f(x)\,d\mu_t(x)&=\int_{\Gamma_{[0,T]}}\nabla f(\gamma(t))\cdot\dot\gamma(t)\,d\boldsymbol\eta(\gamma)\\
&=\int_{\R^d}\nabla f(y)\int_{e_t^{-1}(y)}\dot\gamma(t)\,d\eta_{t,y}(\gamma)\,d\mu_t(y)\\
&=\int_{\R^d}\nabla f(y)\int_{\pi_1^{-1}(y)}v\,d \nu_y[\mu_t](v)\,d\mu_t(y)\\
&=\int_{T\R^d}\nabla f(x)\cdot v\,d V[\mu_t](x,v),
\end{align*}
where we used hypothesis $(ii)$.
\end{proof}

\begin{remark}
We observe that the second request in item $(iii)$ can be satisfied assuming hypothesis $(H1)$ for the PVF $V$ together with the hypothesis
\[\int_0^T\int_{\Gamma_{[0,T]}}\sup_{\gamma\in\mathrm{supp}\,\boldsymbol\eta}|\gamma(t)|\,d\boldsymbol\eta(\gamma)\,dt<+\infty.\]
\end{remark}
\medskip

Let us now pass to the other implication.
In the second part of the statement that we are going to see, we want to prove the existence of a probabilistic representation $\boldsymbol\eta$ starting from a solution $\boldsymbol\mu$ of the MDE system \eqref{MDE} with given PVF $V$. In this general framework, this can be easily provided by disintegration technique and thanks to 
the Superposition Principle in \cite[Theorem 8.2.1]{AGS}.

\begin{theorem}[Superposition Principle for MDEs - Part II]
Let $T>0$, $V:\cP(\R^d)\to\cP(T\R^d)$ be a PVF, $\mu_0\in\cP(\R^d)$. Let $\boldsymbol\mu=\{\mu_t\}_{t\in[0,T]}\subseteq\cP(\R^d)$ be an absolutely continuous solution of the MDE system \eqref{MDE} for a PVF $V:\cP(\R^d)\to\cP(T\R^d)$ satisfying
\begin{equation*}
\int_0^T\int_{T\R^d}|v|^p\,d V[\mu_t](x,v)\,dt<+\infty, \textrm{ for some }p>1.
\end{equation*}
Then there exists a probability measure $\boldsymbol\eta\in\cP(\Gamma_{[0,T]})$ such that 
\begin{itemize}
\item[$(i)$] $\boldsymbol\eta$ is concentrated on curves $\gamma$ such that $\gamma\in AC([0,T];\R^d)$ is a solution of the ODE $\dot\gamma(t)=w_t(\gamma(t))$ for a.e. $t\in(0,T)$, where $w:[0,T]\times\mathbb R^d\to\mathbb R^d$, $(t,y)\mapsto w_t(y)=\displaystyle\int_{\pi_1^{-1}(y)}v\,d\nu_y[\mu_t](v)$ for a.e. $t$ and $\mu_t$-a.e. $y$;
\item[$(ii)$] $\mu_t=e_t\sharp\boldsymbol\eta$ for any $t\in[0,T]$.
\end{itemize}
Straightforwardly, if $\boldsymbol{\hat\eta}\in\cP(\Gamma_{[0,T]})$ realizes $(i)$ and $(ii)$, then for a.e. $t\in[0,T]$ and for $\mu_t$-a.e. $y\in\R^d$ we have
\[\int_{e^{-1}_t(y)} \dot\gamma(t)\,d\hat\eta_{t,y}(\gamma)=\int_{\pi^{-1}_1(y)} v\,d \nu_y[\mu_t](v),\]
where $\hat\eta_{t,y}$ is the disintegration of $\boldsymbol{\hat\eta}$ w.r.t. $e_t$, and $\nu_y[\mu_t]$ is the disintegration of $V[\mu_t]$ w.r.t. the projection on the first component $\pi_1$.
\end{theorem}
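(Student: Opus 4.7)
The plan is to reduce the statement to the classical Superposition Principle of Ambrosio--Gigli--Savar\'e applied to an appropriate (nonlocal, but now with $\boldsymbol\mu$ fixed, \emph{local}) continuity equation. By Proposition \ref{prop:equivMDE-CE}, the hypothesis that $\boldsymbol\mu$ solves the MDE is equivalent to $\boldsymbol\mu$ being a distributional solution of $\partial_t\mu_t+\mathrm{div}(w_t\mu_t)=0$ with the Borel vector field
\[w_t(y)=\int_{\pi_1^{-1}(y)} v\, d\nu_y[\mu_t](v),\]
which is well-defined for a.e. $t$ and $\mu_t$-a.e. $y$ thanks to the integrability assumption on $V[\mu_t]$. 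Once $\boldsymbol\mu$ is frozen, $w$ is an ordinary time-dependent Borel vector field on $\R^d$, so it suffices to verify the integrability hypothesis of \cite[Theorem 8.2.1]{AGS} in order to produce the representing measure $\boldsymbol\eta\in\cP(\Gamma_{[0,T]})$ satisfying $(i)$ and $(ii)$.

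The key point is therefore to check that $\int_0^T\int_{\R^d}|w_t(y)|^p\,d\mu_t(y)\,dt<+\infty$ for the same $p>1$ appearing in the assumption. This follows from Jensen's inequality applied to the disintegration: since $\nu_y[\mu_t]$ is a probability measure on the fiber $\pi_1^{-1}(y)$ and $p>1$,
\[|w_t(y)|^p=\left|\int_{\pi_1^{-1}(y)}v\,d\nu_y[\mu_t](v)\right|^p\le \int_{\pi_1^{-1}(y)}|v|^p\,d\nu_y[\mu_t](v).\]
Integrating against $d\mu_t(y)\,dt$ and recognizing $V[\mu_t]=\mu_t\otimes\nu_y[\mu_t]$ yields
\[\int_0^T\!\!\int_{\R^d}|w_t(y)|^p\,d\mu_t(y)\,dt\le\int_0^T\!\!\int_{T\R^d}|v|^p\,dV[\mu_t](x,v)\,dt<+\infty,\]
which is exactly the integrability assumption of \cite[Theorem 8.2.1]{AGS}. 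That theorem then delivers $\boldsymbol\eta\in\cP(\Gamma_{[0,T]})$ concentrated on absolutely continuous curves solving $\dot\gamma(t)=w_t(\gamma(t))$ with $e_t\sharp\boldsymbol\eta=\mu_t$, which is exactly $(i)$ and $(ii)$.

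For the final consequence, let $\boldsymbol{\hat\eta}$ be any element of $\cP(\Gamma_{[0,T]})$ satisfying $(i)$ and $(ii)$, and let $\hat\eta_{t,y}$ be its disintegration with respect to $e_t$. Since $\hat\eta_{t,y}$ is concentrated on $e_t^{-1}(y)$, for $\hat\eta_{t,y}$-a.e. $\gamma$ one has $\gamma(t)=y$, and by $(i)$ also $\dot\gamma(t)=w_t(\gamma(t))=w_t(y)$ for a.e. $t$. Integrating in $\gamma$ and using the very definition of $w_t(y)$ gives
\[\int_{e_t^{-1}(y)}\dot\gamma(t)\,d\hat\eta_{t,y}(\gamma)=w_t(y)=\int_{\pi_1^{-1}(y)}v\,d\nu_y[\mu_t](v),\]
for a.e. $t\in[0,T]$ and $\mu_t$-a.e. $y\in\R^d$, which is the claimed identity. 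The only delicate point in the whole argument is the Jensen estimate needed to invoke \cite[Theorem 8.2.1]{AGS} — this is precisely where the hypothesis $p>1$ (rather than merely $p=1$) is used; everything else is a routine application of disintegration.
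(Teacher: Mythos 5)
Your proposal is correct and follows essentially the same route as the paper: freeze $\boldsymbol\mu$, pass to the continuity equation with the barycentric field $w_t$, verify the $p$-integrability via Jensen's inequality on the disintegration, invoke \cite[Theorem 8.2.1]{AGS}, and obtain the final identity by disintegrating $\boldsymbol{\hat\eta}$ with respect to $e_t$. The only cosmetic difference is that the paper verifies the last identity by testing against $\varphi\in C^\infty_c([0,T]\times\R^d)$ rather than arguing pointwise, but this is the same Fubini/disintegration argument.
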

\begin{proof}
Let us take any $f\in C^\infty_c(\R^d)$.
Let $\boldsymbol\mu=\{\mu_t\}_t$ be as in the statement. Then, by Definition \ref{def:sol} and by disintegrating $V$ w.r.t. the projection map $\pi_1$,
\begin{align*}
\frac{d}{dt}\int_{\R^d}f(y)\,d\mu_t(y)&=\int_{T\R^d}\nabla f(y)\cdot v\,d V[\mu_t](y,v)\\
&=\int_{\R^d}\nabla f(y)\cdot\int_{\pi_1^{-1}(y)}v\,d \nu_y[\mu_t](v)\,d\mu_t(y)
\end{align*}
for a.e. $t\in[0,T]$.

By denoting with $w:[0,T]\times\R^d\to\R^d$, $w_t(y):=\int_{\pi_1^{-1}(y)}v\,d \nu_y[\mu_t](v)$ for a.e. $t$ and $\mu_t$-a.e. $y$, we notice that $w$ is a Borel vector field by disintegration property (Theorem \ref{thm:disintegr}) and that $\boldsymbol\mu$ is a solution of the continuity equation $\partial_t\mu_t+\mathrm{div}(w_t\mu_t)=0$.

Moreover, by Jensen's inequality, we get
\begin{align*}
\int_0^T\int_{\R^d}|w_t(y)|^p\,d\mu_t(y)\,dt&=\int_0^T\int_{\R^d}\left|\int_{\pi_1^{-1}(y)}v\,d \nu_y[\mu_t](v)\right|^p d\mu_t(y)\,dt\\
&\le\int_0^T\int_{T\R^d}|v|^p\,d V[\mu_t](y,v)\,dt<+\infty,\\
\end{align*}
thanks to the hypothesis on $V$.
We can thus apply the classical Superposition Principle in \cite[Theorem 8.2.1]{AGS} to get $(i-ii)$.

Let now $\boldsymbol{\hat\eta}$ be as in the statement, hence $\dot\gamma(t)=w_t(\gamma(t))$ for a.e. $t\in(0,T)$ and for $\boldsymbol{\hat\eta}$-a.e. $\gamma$. Last property is strainghtforward by disintegration and definition of $w$, indeed for any test function $\varphi\in C^\infty_c([0,T]\times\mathbb R^d)$,
\begin{align*}
\int_0^T\int_{\Gamma_{[0,T]}}\varphi(t,y)\,\dot\gamma(t)\,d\boldsymbol{\hat\eta}(\gamma)\,dt&=\int_0^T\int_{\R^d}\varphi(t,y)\int_{e_t^{-1}(y)}\dot\gamma(t)\,d\hat\eta_{t,y}(\gamma)\,d\mu_t(y)\,dt\\
&=\int_0^T\int_{\R^d}\varphi(t,y)\int_{e_t^{-1}(y)}w_t(y)\,d\hat\eta_{t,y}(\gamma)\,d\mu_t(y)\,dt\\
&=\int_0^T\int_{\R^d}\varphi(t,y)\int_{\pi_1^{-1}(y)}v\,d \nu_y[\mu_t](v)\,d\mu_t(y)\,dt.
\end{align*}
\end{proof}

We now complete the analysis concerning the connection between Superposition Principle and   MDEs by giving an example of an explicit and consistent construction for a probabilistic representation of the LAS scheme.\\
Let $\boldsymbol\mu\subseteq\cP(\R^d)$ be a solution of the MDE system \eqref{MDE} obtained as uniform-in-time limit of LASs $\{\boldsymbol\mu^N\}_{N\in\N}$. We now construct a probabilistic representation for $\boldsymbol\mu$ that is concentrated on uniform limits of the trajectories $\gamma_{i,j}:[0,T]\to\mathbb R^d$, $\gamma_{i,j}(t)=x_i+t v_j$, where the LASs $\boldsymbol\mu^N$ are concentrated.

\medskip

Let us start by fixing some notation. Given $I_1, I_2\subset\R$ nonempty and compact intervals, with $\max I_1=\min I_2$, we define
\begin{enumerate}
	\item the \emph{set of compatible trajectories}
	\[\mathcal D_{I_1,I_2}:=\left\{(\gamma_1,\gamma_2)\in\Gamma_{I_1}\times\Gamma_{I_2}\,:\,\gamma_1(\max I_1)=\gamma_2(\min I_2)\right\};\]
	\item the \emph{concatenation} $\gamma_1\star\gamma_2\in\Gamma_{I_1\cup I_2}$ of curves $\gamma_1\in\Gamma_{I_1}$, $\gamma_2\in\Gamma_{I_2}$, with $\gamma_1(\max I_1)=\gamma_2(\min I_2)$, is a map from $I_1\cup I_2$ to $\R^d$ defined as follows
	\begin{equation*}
	\gamma_1\star\gamma_2(t)=
	\begin{cases}
	\gamma_1(t),&\textrm{if }t\in I_1,\\
	\gamma_2(t),&\textrm{if }t\in I_2;
	\end{cases}
	\end{equation*}
	\item the \emph{merge map} $M_{I_1,I_2}:\mathcal D_{I_1,I_2}\to\Gamma_{I_1\cup I_2}$, $(\gamma_1,\gamma_2)\mapsto \gamma_1\star\gamma_2$. We will omit the subscripts $I_1,I_2$ when clear.
\end{enumerate}

\begin{definition}\label{def:reprLAS}
	Let $T>0$, and $\boldsymbol\mu^N=\{\mu_t^N\}_{t\in[0,T]}\subseteq\cP_c(\mathbb R^d)$ be the LAS defined  in Definition \ref{def:LAS}. Denote with $I_a^b:=[a\Delta_N,b\Delta_N]$, for $a,b\in\N$, $a\le b$. We define
	\begin{enumerate}
		\item the  following measure in $\cP(\Gamma_{I_\ell^{\ell+1}})$
		\[\boldsymbol\eta^N_{I_\ell^{\ell+1}}:=\sum_{i,j} m_{i,j}^v(V[\mu^N_{\ell\Delta_N}])\delta_{\gamma_{i,j}^\ell}=\int_{T\R^d}\delta_{\gamma_{x,v}^\ell}\,d\mathcal A_N^v(V[\mu^N_{\ell\Delta_N}])(x,v),\]
		where $\gamma_{i,j}^\ell=\gamma_{x_i,v_j}^\ell$ are the solutions of the LAS characteristic system defined on $I_\ell^{\ell+1}$, thus $\gamma_{i,j}^\ell(t)=x_i+(t-\ell\Delta_N)v_j$, for $t\in I_\ell^{\ell+1}$;
		\item $\boldsymbol\eta^N_{I_0^{h+1}}:=\mu^N_{h\Delta_N}\otimes M_{I_0^h,I_h^{h+1}}\sharp(\eta^N_{I_0^h,x}\otimes\eta^N_{I_h^{h+1},x})\in\cP(\Gamma_{[0,(h+1)\Delta_N]})$, 
		defined by recursion for $h=1,\dots,N-1$, where $\eta^N_{I_0^h,x}$ and $\eta^N_{I_h^{h+1},x}$ are respectively the disintegrations of $\boldsymbol\eta^N_{I_0^h}$ and $\boldsymbol\eta^N_{I_h^{h+1}}$ w.r.t. $e_{h\Delta_N}$. That is 
\[\int_{\Gamma_{I_0^{h+1}}}\varphi(\gamma)\,d\boldsymbol\eta^N_{I_0^{h+1}}(\gamma):=\int_{\mathbb R^d}\left(\int_{e^{-1}_{h\Delta_N}(x)}\varphi(\gamma)\,d \left[M_{I_0^h,I_h^{h+1}}\sharp(\eta^N_{I_0^h,x}\otimes\eta^N_{I_h^{h+1},x})\right](\gamma)\right)\,d\mu^N_{h\Delta_N}(x),\]
for any bounded Borel function $\varphi:\Gamma_{I_0^{h+1}}\to\mathbb R$.

		\item $\boldsymbol\eta^N:=\boldsymbol\eta^N_{I_0^{N}}\in\cP(\Gamma_{[0,T]})$.
	\end{enumerate}
\end{definition}

\begin{proposition}\label{prop:reprLASok}
	Let $V$ be a PVF satisfying $(H1)$ and $(H2)$, $\mu_0\in\cP_c(\R^d)$, and $\boldsymbol\mu$ be a solution of the MDE system \eqref{MDE} obtained as uniform-in-time limit of LASs $\{\boldsymbol\mu^N\}_{N}$ for the Wasserstein metric. Let $\boldsymbol\eta^N$ be as in Definition \ref{def:reprLAS}. Then
	\begin{enumerate}
		\item\label{item1:propRA} $\boldsymbol\eta^N$ is a probabilistic representation for the LAS $\boldsymbol\mu^N$, i.e. $\mu_t^N=e_t\sharp\boldsymbol\eta^N$ for all $t\in[0,T]$;
		\item\label{item2:propRA} $\boldsymbol\eta^N\rightharpoonup^*\boldsymbol\eta$ up to subsequences, and $\boldsymbol\eta$ is a probabilistic representation for $\boldsymbol\mu$.
	\end{enumerate}
\end{proposition}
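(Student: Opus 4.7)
The plan is to treat the two claims separately, proving (\ref{item1:propRA}) by induction on the number $h$ of concatenated time slabs, and (\ref{item2:propRA}) via a tightness argument based on Ascoli--Arzel\`a plus continuity of the evaluation maps.

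For (\ref{item1:propRA}), I first verify the single-slab identity: on the interval $I_\ell^{\ell+1}$, pushing $\boldsymbol\eta^N_{I_\ell^{\ell+1}}=\sum_{i,j}m_{i,j}^v(V[\mu^N_{\ell\Delta_N}])\delta_{\gamma^\ell_{i,j}}$ through $e_t$ gives $\sum_{i,j}m_{i,j}^v(V[\mu^N_{\ell\Delta_N}])\delta_{x_i+(t-\ell\Delta_N)v_j}$, which matches the time-interpolated LAS formula of Definition \ref{def:LAS}. I then argue by induction on $h$ that $e_t\sharp\boldsymbol\eta^N_{I_0^{h+1}}=\mu^N_t$ for every $t\in I_0^{h+1}$. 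The base case $h=0$ is the previous identity. For the inductive step, the disintegration with respect to $e_{h\Delta_N}$ guarantees that $\eta^N_{I_0^h,x}\otimes\eta^N_{I_h^{h+1},x}$ is concentrated on $\mathcal{D}_{I_0^h,I_h^{h+1}}$, so the merge map is well-defined $\mu^N_{h\Delta_N}$-a.e. and preserves the value at the junction. By Remark \ref{rmk:disintegr} and the compatibility of $M$ with evaluation ($e_t\circ M=e_t$ on the appropriate piece) the push-forward reduces to $e_t\sharp\boldsymbol\eta^N_{I_0^h}$ for $t\in I_0^h$ and to $e_t\sharp\boldsymbol\eta^N_{I_h^{h+1}}$ for $t\in I_h^{h+1}$, which by the inductive hypothesis and the base case both coincide with $\mu_t^N$.

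For (\ref{item2:propRA}), I establish tightness of $\{\boldsymbol\eta^N\}_N$ in $\cP(\Gamma_{[0,T]})$. By the uniform support bound \eqref{eq:growthLAS} one has $\supp\mu^N_t\subset B(0,R)$ for all $t\in[0,T]$ and $N$, and hypothesis $(H1)$ then yields $|v_j|\le C(1+R)$ for every velocity atom appearing in $\mathcal{A}_N^v(V[\mu^N_{\ell\Delta_N}])$. Hence every curve in $\supp\boldsymbol\eta^N$, being a concatenation of linear segments with speeds at most $C(1+R)$ and starting in $B(0,R)$, is uniformly bounded and Lipschitz with the same constant. By Ascoli--Arzel\`a, $\bigcup_N\supp\boldsymbol\eta^N$ is relatively compact in $\Gamma_{[0,T]}$ endowed with the sup metric, which yields tightness and hence narrow convergence $\boldsymbol\eta^{N_k}\rightharpoonup^*\boldsymbol\eta$ for some subsequence.

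To conclude that $\boldsymbol\eta$ represents $\boldsymbol\mu$, I fix $t\in[0,T]$ and use continuity of $e_t:\Gamma_{[0,T]}\to\R^d$, which entails $e_t\sharp\boldsymbol\eta^{N_k}\rightharpoonup^* e_t\sharp\boldsymbol\eta$. Part (\ref{item1:propRA}) gives $e_t\sharp\boldsymbol\eta^{N_k}=\mu_t^{N_k}$, and by hypothesis $\mu_t^{N_k}\to\mu_t$ in the Wasserstein sense, in particular narrowly; uniqueness of the narrow limit then forces $\mu_t=e_t\sharp\boldsymbol\eta$. The only delicate point is the inductive identification in part (\ref{item1:propRA}), where one must verify that the recursive disintegrate-and-merge construction commutes with $e_t$ on each slab; this is the main technical check, but it is a direct consequence of Theorem \ref{thm:disintegr} and the fact that the concatenation agrees with each of its pieces on their domains.
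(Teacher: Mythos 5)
Your proof is correct. Part (\ref{item1:propRA}) follows essentially the same route as the paper: a direct verification on a single slab followed by a recursion/induction on the number of concatenated slabs, using that the merge map commutes with $e_t$ on each piece and that the disintegration w.r.t.\ $e_{h\Delta_N}$ makes the concatenation well defined. For part (\ref{item2:propRA}) you take a genuinely different, and more elementary, route to tightness. The paper mimics the proof of the classical superposition principle: it applies \cite[Lemma 5.2.2]{AGS} with $\boldsymbol r^1=e_0$, $\boldsymbol r^2:\gamma\mapsto\gamma-\gamma(0)$, bounds $\int\int_0^T|\dot\gamma|^p$ uniformly via $(H1)$ and \eqref{eq:growthLAS}, and invokes the compactness of the sublevels of the $p$-action functional. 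You instead observe that every curve charged by $\boldsymbol\eta^N$ is a concatenation of linear segments issuing from the bounded grid with uniformly bounded slopes, so that all the $\boldsymbol\eta^N$ are concentrated on a single compact subset of $\Gamma_{[0,T]}$ given by Ascoli--Arzel\`a; this is simpler and fully exploits the piecewise-linear structure of the LAS characteristics, at the price of being less portable (the paper's argument is the one that generalizes to the non-discrete representations and is what yields, via \cite[Theorem 5.1.8]{AGS}, the concluding remark that $\boldsymbol\eta$ is concentrated on uniform limits of LAS characteristics). One cosmetic point: the velocity atoms of $\mathcal A_N^v(V[\mu^N_{\ell\Delta_N}])$ are grid-rounded, so the uniform speed bound is $C(1+R)+\Delta_N^v$ rather than $C(1+R)$; this is harmless. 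Your final identification of the limit via continuity of $e_t$ and uniqueness of narrow limits is the same as the paper's.
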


\begin{proof}[Proof of \eqref{item1:propRA}.]
	
 First we prove that $\mu_t^N= e_t\sharp\boldsymbol\eta^N_{I_\ell^{\ell+1}}$ for all $\ell=0,\dots,N-1$, and $t\in I_\ell^{\ell+1}$.
	By Definition \ref{def:LAS}, $\mu_t^N:=\sum_{i,j}m_{i,j}^v(V([\mu^N_{\ell\Delta_N}])\,\delta_{x_i+(t-\ell\Delta_N)v_j}$. Thus, for all $\varphi\in C^0_b(\R^d)$,
	\begin{align*}
	\int_{\R^d}\varphi(x)\,d\mu_t^N&=\sum_{i,j}\varphi(x_i+(t-\ell\Delta_N)v_j)\cdot m_{i,j}^v(V[\mu^N_{\ell\Delta_N}])\\
	&=\sum_{i,j}\varphi(\gamma_{i,j}^\ell(t))\cdot m_{i,j}^v(V[\mu^N_{\ell\Delta_N}])=\int_{\Gamma_{I_\ell^{\ell+1}}}\varphi(\gamma(t))\,d\boldsymbol\eta^N_{I_\ell^{\ell+1}}.
	\end{align*}
	
	Let us now conclude the proof by showing that $e_t\sharp\boldsymbol\eta^N_{I_0^{h+1}}=\mu_t^N$ for all $h=1,\dots,N-1$, and $t\in I_0^{h+1}$. Indeed, for all $\varphi\in C^0_b(\R^d)$,
	\begin{align*}
	\int\varphi(\gamma(t))\,d\boldsymbol\eta^N_{I_0^{h+1}}(\gamma)&=\int\varphi(\gamma(t))\,d\left[\mu^N_{h\Delta_N}\otimes M\sharp(\eta^N_{I_0^h,y}\otimes\eta^N_{I_{h}^{h+1},y})\right](\gamma)\\
	&=\int\varphi(\gamma(t))\,d\left[M\sharp(\eta^N_{I_0^h,y}\otimes\eta^N_{I_{h}^{h+1},y})\right](\gamma)\,d\mu^N_{h\Delta_N}(y)\\
	&= \int\varphi(\gamma_1\star\gamma_2(t))\,d\eta^N_{I_0^h,y}(\gamma_1)\,d\eta^N_{I_{h}^{h+1},y}(\gamma_2)\,d\mu^N_{h\Delta_N}(y)\\
	&=\int\varphi(\gamma_2(t))\,d\eta^N_{I_h^{h+1},y}(\gamma_2)\,d\mu^N_{h\Delta_N}(y)\\
	&=\int\varphi(\gamma_2(t))\,d\boldsymbol\eta^N_{I_h^{h+1}}(\gamma_2)=\int\varphi(x)\,d\mu^N_t(x),
	\end{align*}
	where in the fourth equality we assumed, without loss of generality, $t\in I_h^{h+1}$, otherwise we iterate the same procedure for $\boldsymbol\eta^N_{I_0^h}$. In the last two passages we used what proved before, i.e. $e_t\sharp\boldsymbol\eta^N_{I_h^{h+1}}=\mu_t^N$.
	
	\medskip
	
	\noindent\emph{Proof of \eqref{item2:propRA}.} First, let us prove that the family $\{\boldsymbol\eta^N\}_N$ is tight, thus there exists $\boldsymbol\eta\in\cP(\Gamma_{[0,T]})$ such that $\boldsymbol\eta^N\rightharpoonup^*\boldsymbol\eta$, up to a non-relabeled subsequence. We proceed in the same way as in \cite[Theorem 8.2.1]{AGS}. Indeed, we use \cite[Lemma 5.2.2]{AGS} with
	\[\boldsymbol r^1:=e_0:\gamma\mapsto \gamma(0)\in\R^d,\quad \boldsymbol r^2:\gamma\mapsto\gamma-\gamma(0)\in\Gamma_{[0,T]},\]
	and we notice that $\boldsymbol r^1\times\boldsymbol r^2$ is proper, and by the previous item we have that the family $\{\boldsymbol r^1\sharp\boldsymbol\eta^N\}_N$ is given by the first marginals $\{\mu_0^N\}$ which is tight (indeed, it narrowly converges to $\mu_0$ by assumption),
	furthermore $\beta^N:=\boldsymbol r^2\sharp\boldsymbol\eta^N\in\Gamma_{[0,T]}$ satisfy for $p>1$,
	\begin{align*}
	\int_{\Gamma_{[0,T]}}\int_0^T|\dot\gamma(t)|^p\,dt\,d\beta^N(\gamma)&=\int_{\Gamma_{[0,T]}}\int_0^T\left|\frac{d}{dt}\left(\gamma-\gamma(0)\right)(t)\right|^p\,dt\,d\boldsymbol\eta^N(\gamma)\\
&=\int_0^T\int_{\Gamma_{[0,T]}}|\dot\gamma(t)|^p\,d\boldsymbol\eta^N(\gamma)\,dt\\
&\le T\cdot \sup_{t\in[0,T]}\,\sup_{(x,v)\in\text{supp}\,\mathcal A_N^v(V[\mu^N_t])}|v|^p\\
	\end{align*}
which is uniformly bounded thanks to assumption $(H1)$ on $V$ and recalling \eqref{eq:growthLAS}.
	Hence, the tightness of the family $\beta^N$ follows by \cite[Remark 5.1.5]{AGS}, since for $p>1$ the functional $\gamma\mapsto\int_0^T |\dot\gamma|^p\,dt$ (set to $+\infty$ if $\gamma\notin AC^p([0,T];\mathbb R^d)$ or if $\gamma(0)\neq 0$) has compact sublevels in $\Gamma_{[0,T]}$. Thus, the family $\{\boldsymbol\eta^N\}_N$ is tight.
	
	By $weak^*$-convergence of $\mu^N_t$ to $\mu_t$ for all $t\in[0,T]$ and of $\boldsymbol\eta^N$ to some $\boldsymbol\eta$ up to subsequences, and since from item \eqref{item1:propRA}, $\mu_t^N=e_t\sharp\boldsymbol\eta^N$, then we immediately have that $\boldsymbol\eta$ is a probabilistic representation for $\boldsymbol\mu$, i.e. $e_t\sharp\boldsymbol\eta=\mu_t$.
By construction (see \cite[Theorem 5.1.8]{AGS}), we have that $\boldsymbol\eta$ is supported on the curves $\gamma\in\Gamma_{[0,T]}$, where $\gamma$ are the uniform limits of the LASs characteristics where $\boldsymbol\eta^N$ is supported.
\end{proof}

\section{A semi-discrete  Lagrangian scheme for MDE}\label{sec:SDS}
In this section, we first define a semi-discrete in time Lagrangian scheme for \eqref{MDE} and   compare it to the LAS scheme  in Definition \ref{def:LAS}, showing that they converge to the same limit.
 Fixed $T>0$, for $N \in \N$ we set $\Delta t^N = T/N$ and we define a  partition of $[0,T]$ by 
\begin{equation}\label{partition}
D_N=\{t_{k}^{N} = k \Delta t^N, k=0, \dots, N\}.
\end{equation}  
To simplify the notation, we omit the index $N$ in $t^N_k$ and in $\Delta t^N$ if there is no ambiguity.\\
Given $\mu_0\in\cP_c(\R^d)$ and a PVF $V$, we  set
\begin{equation}\label{DTscheme}
\left\{
\begin{array}{l}
\bar \mu^N_{t_0}:=\mu_0;\\[6pt]
\bar \mu^N_{t_{k+1}} := \int_{T\R^d} \delta_{x + v \Delta t^N}d\nu_x[\bar\mu^N_{t_k}](v)d\bar\mu^N_{t_k}(x) = \bar\mu^N_{t_k} \oplus \Delta t\cdot \nu_{x}[\bar\mu^N_{t_{k}}],
\end{array}
\right.
\end{equation}
by Bochner integration formula (see \cite{Bochner}).
Applying iteratively the previous definition, we get
\begin{equation}\label{DTscheme_bis}
\begin{split}
\bar \mu^N_{t_{k+1}}
&= \int_{\R^d}\left(\int_{\R^{(k+1)\, d}} \delta_{x^N_{k+1}} \hspace{1 mm} d\nu_{x^N_{k}}[\bar\mu^N_{t_k}](v_k^N)\dots d\nu_x[\bar\mu^N_{t_0}](v_0)\right)d\mu_0(x)
\\& = \mu_0 \oplus\left(\bigoplus_{i=0}^{k} \Delta t\cdot\nu_{x_i^N}[\bar\mu^N_{t_i}]\right),
\end{split}
\end{equation}
where $x_{k+1}^N := x + \sum_{j=0}^{k}v^N_j \Delta t^N$, for  $x \in \supp\,(\mu_0)$ and $v_j^N \in \supp\,(\nu_{x^N_{j}}[\bar\mu^N_{t_j}])$.
We extend $\boldsymbol{\bar\mu}^N$ to the interval $[0,T ] $ by setting for $t \in (t_k, t_{k+1}]$
\[
\bar \mu^N_t := \int_{T\R^d} \delta_{x + v (t - t_k)}d\nu_x[\bar\mu^N_{t_k}](v)d\bar\mu^N_{t_k}(x),
\]
and we denote $\boldsymbol{\bar\mu}^N=\{\bar\mu^N_t\}_{t\in[0,T]}$.
Due to the assumptions on $V$, we have that $\bar \mu^N_{t} \in \cP(\R^d)$ for all $t \in [0,T]$. 
Moreover, since for some $R>0$, $\supp\,(\mu_0) \subset B(0,R)$, then by $(H1)$ and arguing as in    \cite[Lemma 3.3]{Piccoli} it follows that
\begin{equation}\label{exp_bound}
\supp\,(\bar\mu^N_{t} )\subset B(0, e^{CT}(R+1)), \qquad \forall t\in [0,T].
\end{equation}

\begin{theorem}\label{DTscheme_conv}
Assume $(H1)$, $(H3)$ and
let $\mu_0 \in \cP_c(\R^d)$. Then, the   scheme   \eqref{DTscheme} converges, up to a subsequence, to a solution of \eqref{MDE}.\\
Moreover, assume that  there exists a sequence $\{N_k\}_{k \in \N}$ such that both the scheme \eqref{DTscheme} and the LAS schemes in Definition \ref{def:LAS} converge.  Then, they converge to the same solution of \eqref{MDE}.
\end{theorem}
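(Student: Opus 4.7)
The plan is to handle the two claims separately. For the convergence claim, I will follow the standard Arzel\`a-Ascoli recipe for measure-valued maps. The uniform support bound \eqref{exp_bound} combined with $(H1)$ yields a bound $\vmax$, depending only on $T, C, \mu_0$, on the velocities in $\supp V[\bar\mu^N_{t_k}]$. On each subinterval $[t_k,t_{k+1}]$, $\bar\mu^N_t$ is the pushforward of $V[\bar\mu^N_{t_k}]$ by $(x,v)\mapsto x+(t-t_k)v$; using this map as a coupling between $\bar\mu^N_t$ and $\bar\mu^N_s$ gives
\[
W^{\R^d}(\bar\mu^N_t,\bar\mu^N_s)\le\int_{T\R^d}|v|\,|t-s|\,dV[\bar\mu^N_{t_k}](x,v)\le \vmax\,|t-s|
\]
for $s,t$ in the same cell, and chaining across cells lifts this to arbitrary $s<t$. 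With range in the compact set $\cP(\overline{B(0,R')})$, Ascoli produces a subsequence converging uniformly in $W^{\R^d}$ to a limit $\boldsymbol{\bar\mu}\in C([0,T];\cP_c(\R^d))$. To verify that $\boldsymbol{\bar\mu}$ solves \eqref{MDE}, I take $f\in C^\infty_c(\R^d)$ and apply Taylor's formula on each step to write
\[
\langle\bar\mu^N_{t_{k+1}}-\bar\mu^N_{t_k},f\rangle = \Delta t\!\int_{T\R^d}\!\nabla f(x)\cdot v\,dV[\bar\mu^N_{t_k}](x,v)+R_k^N,\qquad |R_k^N|\le \|D^2 f\|_\infty \vmax^2(\Delta t)^2,
\]
then telescope up to a node $t_m$: the sum of leading terms is a Riemann sum for $\int_0^{t_m}\!\int_{T\R^d}\nabla f\cdot v\,dV[\bar\mu_s]\,ds$, converging thanks to $(H2)$ and the uniform convergence of $\bar\mu^N$, while the remainder is $O(T\Delta t)$. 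Extending from nodes to arbitrary $t\in[0,T]$ via the Lipschitz-in-time estimate yields Definition \ref{def:sol}.

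For the identification of limits, I will compare the two schemes step by step. Both iterations have the form $\Phi_{\Delta t}(\cdot)$ with $\Phi_\Delta(V):=\int_{T\R^d}\delta_{x+\Delta v}\,dV(x,v)$: the LAS reads $\mu^N_{t_{k+1}}=\Phi_{\Delta t}(\mathcal A_N^v(V[\mu^N_{t_k}]))$ (since $\mu^N_{t_k}$ is grid-supported, $\mathcal A_N^x$ acts trivially), and the semi-discrete one reads $\bar\mu^N_{t_{k+1}}=\Phi_{\Delta t}(V[\bar\mu^N_{t_k}])$. Directly from the definition of $\mathcal W$, the one-step stability
\[
W^{\R^d}(\Phi_\Delta(V_1),\Phi_\Delta(V_2))\le W^{\R^d}(\mu_1,\mu_2)+\Delta\,\mathcal W(V_1,V_2)
\]
is immediate: any admissible plan $T$ for $\mathcal W(V_1,V_2)$ produces a coupling between $\Phi_\Delta(V_1)$ and $\Phi_\Delta(V_2)$ of cost $\int(|x-y|+\Delta|v-w|)\,dT$, and the optimality of $\pi_{13}\sharp T$ on the marginals accounts precisely for the $W^{\R^d}(\mu_1,\mu_2)$ term.

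Setting $a_k^N:=W^{\R^d}(\bar\mu^N_{t_k},\mu^N_{t_k})$ and inserting the intermediate PVF $V[\mu^N_{t_k}]$, the triangle inequality on $W^{\R^d}$ combined with the above stability splits $a_{k+1}^N$ into two contributions: from $V[\bar\mu^N_{t_k}]$ to $V[\mu^N_{t_k}]$, bounded via $(H3)$ by $(1+L\Delta t)\,a_k^N$; and from $V[\mu^N_{t_k}]$ to $\mathcal A_N^v(V[\mu^N_{t_k}])$, which share the first marginal $\mu^N_{t_k}$, bounded by $\Delta t/N$ through \eqref{eq:estimLAS} (applied with the identity transport on the first component). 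Hence
\[
a_{k+1}^N\le(1+L\Delta t)\,a_k^N+\Delta t/N,\qquad a_0^N\le T/N^2.
\]
Discrete Gronwall gives $a_k^N=O(1/N)$ uniformly in $k$, and the Lipschitz-in-time bound (shared by both families) lifts this to $\sup_{[0,T]}W^{\R^d}(\bar\mu^N_t,\mu^N_t)\to 0$; along the common convergent subsequence $\{N_k\}$, the two limits must agree. The point I expect to be most delicate is this one-step stability: since $\mathcal W$ is not a genuine metric, the triangle inequality must be applied in $W^{\R^d}$ at the level of the pushed measures, with the intermediate $V[\mu^N_{t_k}]$ chosen so that each of the two pieces has matching marginals on one side, letting $(H3)$ and \eqref{eq:estimLAS} be invoked cleanly in turn.
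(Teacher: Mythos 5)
Your proposal is correct and follows essentially the same route as the paper: equi-Lipschitz-in-time bounds plus Ascoli--Arzel\`a for compactness, a stepwise Taylor expansion with an $O(\Delta t^2)$ remainder and the Lipschitz estimate on $V$ to identify the limit as a solution, and, for the comparison of the two schemes, the same one-step stability $W^{\R^d}(\Phi_\Delta(V_1),\Phi_\Delta(V_2))\le W^{\R^d}(\mu_1,\mu_2)+\Delta\,\mathcal W(V_1,V_2)$ combined with the triangle inequality through $V[\mu^N_{t_k}]$, \eqref{eq:estimLAS}, and a discrete Gronwall argument. The only nitpick is that you invoke $(H2)$ for the convergence of the Riemann sums, whereas the theorem assumes $(H3)$; this is harmless since $(H3)$ together with $W^{T\R^d}(V_1,V_2)\le\mathcal W(V_1,V_2)+W^{\R^d}(\mu_1,\mu_2)$ yields the continuity you need on sets of uniformly compactly supported measures, which is exactly how the paper estimates the corresponding term.
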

\begin{proof}
We first show that sequence \eqref{DTscheme} is equi-Lipschitz continuous in time.  For $f \in  \text{Lip}_1(\R^d)$, by $(H1)$ and \eqref{exp_bound} we have
\begin{align*}
|\langle \bar\mu^N_{t_{k+1}} - \bar\mu^N_{t_k}, f\rangle| 
&=\left| \int_{T\R^d}\left[ f(x + v\Delta t) - f(x)\right]d\nu_{x}[\bar\mu^N_{t_k}](v)d\bar\mu^N_{t_k}(x) \right|\\
&\leq \Delta t \int_{T\R^d} |v| d\nu_{x}[\bar\mu^N_{t_k}]d\bar\mu^N_{t_k}(x) \leq  \Delta t C(1 + e^{CT}(R+1)).
\end{align*}
 It follows that
 \[
W^{\R^d}(\bar\mu^N_t, \bar\mu^N_s) \leq K |t - s|, \quad \forall t,s \in [0,T],
\]
for $K = K(R, T)>0$.
By Ascoli-Arzel\`a Theorem, the sequence $\{\boldsymbol{\bar\mu}^N\}_{N \in \N}$ admits at least a subsequence,  still denoted by $\boldsymbol{\bar\mu}^N$, which converges to a measure map $\boldsymbol{\bar\mu} \in \mathrm{Lip}_K([0,T], \cP_c(\R^d))$ such that $\bar\mu_{t=0} = \mu_0$.\\
  We now prove that $\boldsymbol{\bar\mu}$ is  a solution of \eqref{MDE}. For simplicity we index with $N$ the converging subsequence. Given $t \in (t^N_k,t^N_{k+1}]$ and $f \in C^{\infty}(\R^d) \cap \mathrm{Lip}_1(\R^d)$ such that $\| f \|_{C^2(\R^d)} \leq 1$, we have
\begin{align}\label{sol1}
\begin{split}
&\langle \bar\mu_{t} -\mu_0, f\rangle - \int_0^t \int_{T\R^d} \nabla f(x) \cdot v\,dV[\bar\mu_s](x,v)\,ds\\
&=\langle \bar\mu_{t}- \bar\mu^N_{t}  , f\rangle
+\langle \bar\mu_t^N - \mu_0, f\rangle-\int_0^t \int_{T\R^d} \nabla f(x) \cdot v\,dV[\bar\mu_s](x,v)\,ds\\
&=\langle \bar\mu_{t}- \bar\mu^N_{t}  , f\rangle
+\langle \bar\mu_{t}^N - \bar\mu^N_{t_k}, f\rangle+\\
&\qquad+ \sum_{i=0}^{k-1}\left[\langle \bar\mu^N_{t_{i+1}} - \bar\mu^N_{t_i}, f\rangle - \int_{t_i}^{t_{i+1}}\int_{T\R^d} \nabla f(x) \cdot v\,dV[\bar\mu_s](x,v)\,ds\right]+\\
&\qquad- \int_{t_{k}}^t \int_{T\R^d} \nabla f(x) \cdot v\,dV[\bar\mu_s](x,v)\,ds
\end{split}
\end{align}
Recalling that $\bar \mu_{t_{i+1}}^N = \int_{T\R^d} \delta_{x + v \Delta t}d\nu_x[\bar\mu_{t_i}^N]d\bar\mu_{t_i}^N(x)$, 
we have
\begin{align*}
&\langle \bar\mu^N_{t_{i+1}} - \bar\mu^N_{t_i}, f\rangle - \int_{t_i}^{t_{i+1}}\int_{T\R^d} \nabla f(x) \cdot v\,dV[\bar\mu_s](x,v)\,ds\\
&= \int_{T\R^d} \left[ f(x+v \Delta t)  - f(x)\right]dV[\bar\mu^N_{t_i}](x,v)- \int_{t_i}^{t_{i+1}}\int_{T\R^d}\nabla f(x)\cdot v\, dV[\bar\mu_s](x,v) ds\\
&= \int_{t_i}^{t_{i+1}}\int_{T\R^d} \frac{d}{ds}f(x + (s - t_i)v) \,dV[\bar\mu^N_{t_i}](x,v)ds - \int_{t_i}^{t_{i+1}}\int_{T\R^d}\nabla f(x)\cdot v \,dV[\bar\mu_s](x,v) ds\\
&= \int_{t_i}^{t_{i+1}}\int_{T\R^d} \nabla f(x + (s - t_i)v)\cdot v \,dV[\bar\mu^N_{t_i}](x,v)ds - \int_{t_i}^{t_{i+1}}\int_{T\R^d}\nabla f(x)\cdot v \,dV[\bar\mu_s](x,v) ds\\
&= \int_{t_i}^{t_{i+1}}\int_{T\R^d} (\nabla f(x + (s - t_i)v) - \nabla f(x))\cdot v \,dV[\bar\mu^N_{t_i}](x,v)ds +\\
&\qquad+ \int_{t_i}^{t_{i+1}}\int_{T\R^d}\nabla f(x)\cdot v \,d\left( V[\bar\mu^N_{t_i}]- V[\bar\mu_s]\right)(x,v) ds\\
&= I_{1,i} + I_{2,i}.
\end{align*}
We now  estimate $I_{1,i}$ and $I_{2,i}$. By  $(H1)$ and \eqref{exp_bound}, we have
\begin{align*}
|I_{1,i}|
&\leq \int_{t_i}^{t_{i+1}} \int_{T\R^d}|\nabla f(x + (s - t_i)v) - \nabla f(x)|\cdot |v| \,dV[\bar\mu^N_{t_i}](x,v)ds \\
&\leq \| f\|_{C^2(\R^d)}\int_{t_i}^{t_{i+1}}\int_{T\R^d} |s - t_i||v|^2 d V[\bar \mu^N_{t_i}](x,v)ds\\
&\leq \| f\|_{C^2(\R^d)} C^2(1 + e^{CT}(R+1))^2 \Delta t^2.
\end{align*}
By the Kantorovich-Rubinstein duality, $(H3)$ and the triangular inequality, we get
\begin{align*}
I_{2,i}
&\leq \int_{t_i}^{t_{i+1}} W^{T\R^d}(V[\bar\mu^N_{t_i}], V[\bar\mu_s])ds \leq \int_{t_i}^{t_{i+1}} L\cdot W^{\R^d}(\bar\mu^N_{t_i}, \bar\mu_s)ds\\
&\leq \int_{t_i}^{t_{i+1}} L (W^{\R^d}(\bar\mu^N_{t_i}, \bar\mu^N_s) + W^{\R^d}(\bar\mu^N_s,\bar \mu_s))ds\leq \int_{t_i}^{t_{i+1}}L (K(s - t_i) + W^{\R^d}(\bar\mu^N_s,\bar \mu_s))ds\\
&\leq LK\left(\Delta t^2 + \int_{t_i}^{t_{i+1}}W^{\R^d}(\bar\mu^N_s, \bar\mu_s) ds\right).
\end{align*}
Replacing the previous estimates in \eqref{sol1}, we get
\begin{align*}
&\langle \bar\mu_{t} -\mu_0, f\rangle - \int_0^t \int_{T\R^d} \nabla f(x) \cdot v\,dV[\bar\mu_s](x,v)\,ds\\
& \le\langle \bar\mu_{t}- \bar\mu^N_{t}  , f\rangle +\sum_{i=0}^{k-1} (I_{1,i} + I_{2,i})+ \langle \bar \mu^N_t - \bar\mu^N_{t_k}, f\rangle - \int_{t_k}^t \int_{T\R^d} \nabla f(x) \cdot v\,dV[\bar\mu_s](x,v)\,ds\\
&\leq
\langle \bar\mu_{t}- \bar\mu^N_{t}  , f\rangle+ LK \int_0^{t}W^{\R^d}(\bar\mu^N_s, \bar\mu_s)ds + 2K' \,T\,\Delta t,
\end{align*}
where $K' = \max\{LK, C^2(1 + e^{CT}(R+1))^2\}$. Passing to the limit for $N\to \infty$ in the previous inequality and  recalling that  $W^{\R^d}(\bar\mu^N_t, \bar\mu_t) \to 0$, we finally get that
\[  \langle \bar\mu_{t} -\mu_0, f\rangle - \int_0^t \int_{T\R^d} \nabla f(x) \cdot v\,dV[\bar\mu_s](x,v)\,ds=0\]
for any $f$ and therefore $\boldsymbol{\bar\mu}$ is a solution of \eqref{MDE}.\par 
Let us now prove the second part of the theorem. Let $\{\boldsymbol\mu^N\}$ be  a convergent (sub-) sequence generated by the  LAS scheme  in  Definition \ref{def:LAS}, and  $\{\boldsymbol{\bar\mu}^N\}$ be a convergent  one generated by the scheme \eqref{DTscheme}. Let us denote by  $\boldsymbol\mu, \boldsymbol{\bar\mu}$ the corresponding  limits. Then
\begin{align*}
W^{\R^d}(\mu_t, \bar \mu_t) \leq W^{\R^d}(\mu_t, \mu^N_t) + W^{\R^d}(\bar\mu^N_t, \bar\mu_t) +W^{\R^d}(\mu^N_t, \bar\mu^N_t).
\end{align*}
Since the first two terms on the right-hand side of the last inequality converge to $0$ for $N \to +\infty,$ we have to study only  the convergence of the last term. Let $f \in \mathrm{Lip}_1(\R^d)$, $t\in(t_k,t_{k+1}]$, then
\begin{align}
\begin{split}\label{eq:convDSLAS}
\langle \mu_t^N - \bar\mu^N_t, f\rangle &=\int_{T\R^d} f(x + (t - t_k) v)\,d(V[\mu_{t_k}^N] - V[\bar\mu_{t_k}^N])\\&+  \int_{T\R^d}f(x + (t - t_k)v)\,d\left(\mathcal{A}^v_N(V[\mu_{t_k}^N]) - V[\mu_{t_k}^N]\right).
\end{split}
\end{align}
Notice that this computation holds thanks to the common time-grid shared by the two schemes.
For the
first term, we have
\begin{align*}
&\int_{T\R^d} f(x + (t - t_k) v)\,d(V[\mu_{t_k}^N] - V[\bar\mu_{t_k}^N]) = \int_{T\R^d} f(x+w )\, d( V^{\Delta t^N}[\mu_{t_k}^N] - V^{\Delta t^N}[\bar\mu_{t_k}^N]),
\end{align*}
where we have denoted $dV^{\Delta t^N}[\eta] = d((t-t_k) \cdot \nu_x[\eta])d\eta$, referring to the notation in \eqref{eq:prodDef}.\\
Then, we can observe that the map $\psi:(x,w) \to x+w$ belongs to $\mathrm{Lip}_1(T\R^d, \R^d)$. Since $f \in \mathrm{Lip}_1(\R^d, \R)$, we have $f\circ \psi \in \mathrm{Lip}_1(T\R^d, \R)$. Then, from the previous inequality and the Kantorovich-Rubinstein duality it follows that
\begin{align*}
&\int_{T\R^d} f(x + (t - t_k) v)\,d(V[\mu_{t_k}^N] - V[\bar\mu_{t_k}^N])\leq W^{T\R^d}(V^{\Delta t^N}[\mu_{t_k}^N], V^{\Delta t^N}[\bar\mu_{t_k}^N])\\
 &\leq \Delta t^N \mathcal{W}(V[\mu_{t_k}^N] , V[\bar\mu_{t_k}^N]) + W^{\R^d}(\mu_{t_k}^N,\bar\mu_{t_k}^N)
\leq (L\Delta t^N  +1)W^{\R^d}(\mu_{t_k}^N,\bar\mu_{t_k}^N),
\end{align*}
where the last inequality is a consequence of $(H3)$. For the second term in \eqref{eq:convDSLAS}, by the same argument and \eqref{eq:estimLAS}, we found it is bounded by $\frac{1}{N^2}$. Then
\begin{align*}
\langle \mu_t^N - \bar\mu^N_t, f\rangle
&\leq \frac{1}{N^2} + (1 + L \Delta t^N)W^{\R^d}(\mu^N_{t_k}, \bar\mu^N_{t_k})\\
&\leq  \left(1 + L \Delta t^N\right)^{k+1} o\left(\frac{1}{N}\right) + \frac{\sum_{l=0}^k \left(1 + L\Delta t^N\right)^l}{N^2}\\
&\leq e^{\frac{L T(k+1)}{N}} \cdot o\left(\frac{1}{N}\right) + \frac{e^{\frac{L T(k+1)}{N}} - 1}{NLT}\\
\end{align*}
and therefore the two schemes converge to the same limit, up to subsequences.
\end{proof}

Before giving a further consideration coming as a consequence of the previous theorem, we recall the following uniqueness result proved in \cite[Theorem 5.2]{Piccoli}.
\begin{theorem}\label{thmPic:semig}
Let $V$ be a PVF satisfying $(H1)$ and $(H3)$. Assume that, for every $\mu_0$ obtained as convex combination of Dirac deltas, the sequence of LASs converges to a unique limit. Then there exists a unique Lipschitz semigroup whose trajectories are limits of LASs.
\end{theorem}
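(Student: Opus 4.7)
My plan is to build the candidate semigroup $S$ by declaring $S_t\mu$ to be the LAS limit starting from $\mu$, extending it from the case of convex combinations of Dirac deltas (where the limit is unique by hypothesis) to all of $\cP_c(\R^d)$ by a density/stability argument, then verify the three items of Definition \ref{def:semigroup}, and finally observe that any competing semigroup satisfying the same requirements must coincide with $S$.

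First I would establish a discrete stability estimate of the form
\[W^{\R^d}(\mu_t^N,\tilde\mu_t^N)\le e^{LT}\,W^{\R^d}(\mu_0^N,\tilde\mu_0^N) + \varepsilon(N),\]
with $\varepsilon(N)\to 0$, for two LASs starting from nearby initial data. Rather than attacking the LAS scheme directly, I would run this on the Lagrangian scheme \eqref{DTscheme}, where $(H3)$ gives the clean bound $\cW(V[\bar\mu^N_{t_k}],V[\tilde{\bar\mu}^N_{t_k}])\le L\,W^{\R^d}(\bar\mu^N_{t_k},\tilde{\bar\mu}^N_{t_k})$; a discrete Gronwall iteration then propagates stability from step $k$ to step $k+1$, and Theorem \ref{DTscheme_conv} transfers the estimate back to the LAS. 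Letting $N\to\infty$ and invoking the hypothesis that LASs from convex combinations of Dirac deltas admit a unique limit, I extend $\mu_0\mapsto S_t\mu_0$ continuously and unambiguously to all of $\cP_c(\R^d)$ (the extension does not depend on the approximating sequence by the stability bound).

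Next I would check the three conditions of Definition \ref{def:semigroup}. For item (i), $S_0\mu=\mu$ is immediate from the scheme, and the concatenation property $S_tS_s\mu=S_{t+s}\mu$ follows from the stepwise nature of the LAS together with uniqueness of its limit at time $s$, which lets one restart the scheme from $S_s\mu$ and identify the resulting trajectory with the tail of the original one. Item (ii), i.e.\ that $t\mapsto S_t\mu$ solves \eqref{MDE}, is the convergence statement for the LAS in \cite{Piccoli} (and parallels Theorem \ref{DTscheme_conv}). For item (iii): the exponential support bound comes from passing $N\to\infty$ in \eqref{eq:growthLAS}; the Lipschitz bound in $\mu$ is the passage to the limit in the stability estimate above; and the time-Lipschitz bound $W^{\R^d}(S_t\mu,S_s\mu)\le C|t-s|$ is obtained by passing to the limit in the equi-Lipschitz-in-time estimate proved inside Theorem \ref{DTscheme_conv}. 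For the uniqueness assertion, any other Lipschitz semigroup $\tilde S$ whose trajectories are LAS limits must satisfy $S_t\mu_0=\tilde S_t\mu_0$ on all convex combinations of Dirac deltas (that is precisely the running hypothesis), and since such measures are $W^{\R^d}$-dense on every bounded ball while both $S_t$ and $\tilde S_t$ are Lipschitz in $\mu$ by (iii), the equality extends to all of $\cP_c(\R^d)$.

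The main obstacle I anticipate is the discrete stability estimate in the first step, since it requires propagating a Wasserstein bound through a nonlinear scheme whose velocity law itself depends on the evolving measure and which is only defined up to LAS approximation error. Routing the argument through the Lagrangian scheme \eqref{DTscheme} is what makes the bookkeeping manageable: $(H3)$ supplies the sole nonlinear Lipschitz input, the one-step discretization error and the Lagrangian-versus-LAS gap have already been quantified in the proof of Theorem \ref{DTscheme_conv}, and the remaining task is to fold these ingredients into a single discrete Gronwall iteration and let $N\to\infty$.
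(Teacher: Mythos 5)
You should first note that this paper does not actually prove Theorem \ref{thmPic:semig}: it is explicitly recalled from \cite[Theorem 5.2]{Piccoli}, so there is no in-paper proof to compare against. Judged on its own merits, your outline is essentially the right strategy and matches the one in the cited source: a discrete Wasserstein stability estimate with rate $e^{LT}$ coming from $(H3)$, extension from convex combinations of Diracs to $\cP_c(\R^d)$ by density, verification of Definition \ref{def:semigroup}, and uniqueness by Lipschitz dependence plus density. Your one genuine variation is routing the stability estimate through the Lagrangian scheme \eqref{DTscheme} and then invoking Theorem \ref{DTscheme_conv} to transfer it to the LAS; this is legitimate and arguably cleaner, since the one-step bound $W^{\R^d}(\bar\mu^N_{t_{k+1}},\tilde{\bar\mu}^N_{t_{k+1}})\le(1+L\Delta t)\,W^{\R^d}(\bar\mu^N_{t_k},\tilde{\bar\mu}^N_{t_k})$ follows directly by pushing a near-optimal plan $T$ with $\pi_{13}\sharp T$ optimal through $(x,v,y,w)\mapsto(x+\Delta t\,v,\,y+\Delta t\,w)$, whereas the original argument must carry the space/velocity discretization error of the LAS through the iteration by hand. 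The price is that you rely on the quantitative LAS-versus-Lagrangian gap established inside the proof of Theorem \ref{DTscheme_conv}, which is available here but not in \cite{Piccoli}.

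Two points in your sketch deserve tightening. First, the semigroup identity $S_tS_s\mu=S_{t+s}\mu$: restarting the LAS at time $s$ from $\mu^N_s$ only reproduces the tail of the original scheme when $s$ is a grid point of the partition $D_N$, so you should prove the identity for $s,t$ ranging over a dense set of compatible times and then extend by the time-Lipschitz bound in item (iii); moreover the restarted datum $S_s\mu$ is in general not a convex combination of Diracs, so you need the already-extended uniqueness (via the stability estimate), not the raw hypothesis. Second, item (ii) (that LAS limits solve \eqref{MDE}) is stated in \cite{Piccoli} under $(H1)$--$(H2)$, while the theorem assumes $(H1)$ and $(H3)$; you should observe that $(H3)$ together with the inequality $W^{T\R^d}(V_1,V_2)\le\mathcal W(V_1,V_2)+W^{\R^d}(\mu_1,\mu_2)$ yields $(H2)$ on bounded sets, so the convergence result indeed applies. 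With these additions your proposal is a complete and correct proof plan.
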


Then, as a corollary of Theorem \ref{DTscheme_conv}, we are able to get a uniqueness result for a Lipschitz semigroup of MDEs obtained as limit of the semi-discrete Lagrangian scheme, as follows.
\begin{corollary}\label{cor:uniqSemiLagr}
Under the assumptions of Theorem \ref{thmPic:semig}, there exists a unique Lipschitz semigroup generated by the semi-discrete Lagrangian scheme \eqref{DTscheme} and it coincides with that generated by LASs.
\end{corollary}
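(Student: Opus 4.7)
The plan is to upgrade the subsequential convergence given by Theorem \ref{DTscheme_conv} to full convergence of the semi-discrete Lagrangian scheme \eqref{DTscheme}, by exploiting the uniqueness of the LAS limit granted by Theorem \ref{thmPic:semig}. Once this is achieved, the semigroup property and the Lipschitz estimates of Definition \ref{def:semigroup} follow by inheritance from the LAS semigroup $S$.

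Fix $\mu_0 \in \cP_c(\R^d)$. First I would observe that the sequence $\{\boldsymbol{\bar\mu}^N\}_N$ is precompact in $C([0,T];\cP_c(\R^d))$ endowed with the uniform-in-time $W^{\R^d}$ topology. This is an immediate consequence of Ascoli-Arzel\`a together with the equi-Lipschitz-in-time estimate derived in the proof of Theorem \ref{DTscheme_conv} (using $(H1)$ and the uniform support bound \eqref{exp_bound}), which yields a constant $K=K(R,T)$ independent of $N$.

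Next, I would show that every accumulation point of $\{\boldsymbol{\bar\mu}^N\}_N$ coincides with $t \mapsto S_t\mu_0$, where $S$ is the unique Lipschitz semigroup from Theorem \ref{thmPic:semig}. Let $\{\boldsymbol{\bar\mu}^{N_k}\}_k$ be any subsequence converging to some $\boldsymbol{\bar\mu}$. Under the hypotheses of Theorem \ref{thmPic:semig}, the full LAS sequence $\{\boldsymbol{\mu}^N\}_N$ is convergent to $t \mapsto S_t\mu_0$, and therefore so is every subsequence, including $\{\boldsymbol{\mu}^{N_k}\}_k$. Applying the second part of Theorem \ref{DTscheme_conv} along the common index sequence $\{N_k\}_k$ forces $\boldsymbol{\bar\mu}_t = S_t\mu_0$ for every $t \in [0,T]$.

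Since every convergent subsequence of $\{\boldsymbol{\bar\mu}^N\}_N$ has the same limit, the whole sequence converges to $t \mapsto S_t\mu_0$, and setting $\bar S_t\mu_0 := \lim_N \bar\mu^N_t = S_t\mu_0$ produces a Lipschitz semigroup generated by \eqref{DTscheme} which coincides with the LAS one. The subtle point, and the principal obstacle to keep in mind, is ensuring that the hypothesis of Theorem \ref{DTscheme_conv} (that both schemes converge along the same $\{N_k\}$) is legitimately available; this is exactly what the uniqueness of the LAS limit asserted in Theorem \ref{thmPic:semig} provides, since a convergent sequence converges along every subsequence to the same limit, so the LAS subsequence indexed by $\{N_k\}_k$ automatically converges to $t\mapsto S_t\mu_0$.
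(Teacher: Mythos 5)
Your argument is correct and follows essentially the same route as the paper, which derives the corollary directly by combining the second part of Theorem \ref{DTscheme_conv} (common-subsequence limits of the two schemes coincide) with the uniqueness of the LAS-generated Lipschitz semigroup from Theorem \ref{thmPic:semig}. Your explicit compactness-plus-uniqueness-of-accumulation-points bookkeeping is exactly the intended (and in the paper unwritten) justification.
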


\section{A mean velocity scheme for MDE}\label{sec:MVP}
In this section, inspired by the discussion developed in Section \ref{sec:SP}, we provide another  approximation scheme for the problem \eqref{MDE}.
By the result proved in Proposition \ref{prop:equivMDE-CE}, this scheme is analogous to the semi-discrete in time Lagrangian scheme studied in \cite[Scheme 1]{PR} for a nonlocal continuity equation, indeed it is based on the choice of a barycentric velocity field.
As our Lagrangian scheme in Section \ref{sec:SDS}, also this scheme is semi-discrete in time
but, due to a different choice of the velocity field, it may converge to  a different solution of the MDE
(see also Remark \ref{lack_uni} for a discussion about general lack of uniqueness). This fact will be exploited in Section \ref{sec:ex} with clarifying examples.

\medskip

We define $\Delta t^{N}$ and $t^N_k$ as in Section \ref{sec:SDS}. Given $\mu_0\in\cP_c(\R^d)$ and a PVF $V$ satisfying $(H1)$-$(H2)$, the new approximation scheme is given iteratively by
\begin{equation}\label{MVP}
\left\{
\begin{array}{ll}
\hat\mu^N_{t=0} = \mu_0;\\[6pt]
\bar v_{t_j}(x) := \int_{\R^d} v d\nu_x[\hat\mu^N_{t_j}](v),&\\
\hat\mu^N_{t_{j+1}} = \hat\mu^N_{t_j} \oplus \Delta t^N\cdot \delta_{\bar v_{t_j}(x)},
\end{array}
\right.
\end{equation}
for $ j \in \{0, \dots, N-1\}$.
The  scheme \eqref{MVP} transports the measure distribution $\hat\mu^N_{t_j}$ by a velocity field obtained as the barycenter of the velocity measure $\nu_x$ at $\hat\mu^N_{t_j}$. 

The following is another existence result for solutions of  \eqref{MDE}, proved through the mean velocity approximation scheme.
\begin{theorem}\label{thm:convMV}
Assume $(H1)$-$(H2)$ and let $\mu_0 \in \cP_c(\R^d)$. Then, the   scheme   \eqref{MVP} converges, 
	up to a subsequence, to a solution of \eqref{MDE}.
\end{theorem}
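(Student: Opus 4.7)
The plan is to mimic the structure of Theorem \ref{DTscheme_conv}, but since we now only assume the continuity of $V$ rather than the Lipschitz estimate $(H3)$, the error control in the passage to the limit must be based on a uniform modulus of continuity of $V$ on a suitable compact set.

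First, I extend the scheme to all $t\in(t_k,t_{k+1}]$ by setting
\[
\hat\mu^N_t := \hat\mu^N_{t_k}\oplus (t-t_k)\cdot\delta_{\bar v_{t_k}(x)}.
\]
Using $(H1)$ together with the bound $|\bar v_{t_k}(x)|\le\int|v|\,d\nu_x[\hat\mu^N_{t_k}](v)\le C(1+\sup_{y\in\supp\,\hat\mu^N_{t_k}}|y|)$, a discrete Gronwall argument as in \cite[Lemma 3.3]{Piccoli} yields a uniform bound $\supp\,\hat\mu^N_t\subset B(0,\hat R)$ with $\hat R:=e^{CT}(R+1)$, for all $t\in[0,T]$ and $N\in\N$. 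The same bound on $|\bar v|$ then gives, for every $f\in\mathrm{Lip}_1(\R^d)$,
\[
|\langle\hat\mu^N_t-\hat\mu^N_s,f\rangle|\le K|t-s|,\qquad K:=C(1+\hat R),
\]
so the family $\{\boldsymbol{\hat\mu}^N\}_N$ is equi-Lipschitz in the $W_1$-distance and takes values in a $W_1$-compact subset of $\cP_c(\R^d)$. By Ascoli--Arzel\`a I extract a (non-relabeled) subsequence converging uniformly in time to a Lipschitz curve $\boldsymbol{\hat\mu}:[0,T]\to\cP_c(\R^d)$ with $\hat\mu_0=\mu_0$.

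Next I show $\boldsymbol{\hat\mu}$ solves \eqref{MDE}. Fix $f\in C^\infty_c(\R^d)$ and $t\in(t_k,t_{k+1}]$. A telescopic sum plus a fundamental-theorem-of-calculus expansion in each sub-interval yields
\[
\langle\hat\mu^N_{t_{i+1}}-\hat\mu^N_{t_i},f\rangle=\int_{t_i}^{t_{i+1}}\!\!\int_{T\R^d}\nabla f\bigl(x+(s-t_i)\bar v_{t_i}(x)\bigr)\cdot v\,dV[\hat\mu^N_{t_i}](x,v)\,ds,
\]
where I used $\bar v_{t_i}(x)=\int v\,d\nu_x[\hat\mu^N_{t_i}](v)$ to pull the fixed factor back inside the $\nu_x$-integral. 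Writing
\[
\langle\hat\mu^N_t-\mu_0,f\rangle-\int_0^t\!\!\int_{T\R^d}\nabla f(x)\cdot v\,dV[\hat\mu_s](x,v)\,ds = \mathcal{R}^N_1+\mathcal{R}^N_2+\mathcal{R}^N_3,
\]
with $\mathcal{R}^N_1=\langle\hat\mu^N_t-\hat\mu^N_{t_k},f\rangle-\int_{t_k}^t\!\int\!\nabla f\cdot v\,dV[\hat\mu_s]\,ds$, and $\mathcal{R}^N_2$, $\mathcal{R}^N_3$ collecting respectively the ``trajectory'' and ``PVF'' errors
\begin{align*}
\mathcal{R}^N_2 &= \sum_{i=0}^{k-1}\int_{t_i}^{t_{i+1}}\!\!\int_{T\R^d}\bigl[\nabla f(x+(s-t_i)\bar v_{t_i}(x))-\nabla f(x)\bigr]\cdot v\,dV[\hat\mu^N_{t_i}]\,ds,\\
\mathcal{R}^N_3 &= \sum_{i=0}^{k-1}\int_{t_i}^{t_{i+1}}\!\!\int_{T\R^d}\nabla f(x)\cdot v\,d\bigl(V[\hat\mu^N_{t_i}]-V[\hat\mu_s]\bigr)\,ds.
\end{align*}
The boundary term $\mathcal{R}^N_1$ is $O(\Delta t^N)$ by the Lipschitz bound above and $(H1)$. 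For $\mathcal{R}^N_2$, $(H1)$ and the uniform support bound give $|\bar v_{t_i}|\le C(1+\hat R)$, so $|\mathcal{R}^N_2|\le T\|f\|_{C^2}C^2(1+\hat R)^2\Delta t^N$, which vanishes.

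The main obstacle is $\mathcal{R}^N_3$: without $(H3)$ I cannot bound $W^{T\R^d}(V[\hat\mu^N_{t_i}],V[\hat\mu_s])$ linearly by $W^{\R^d}(\hat\mu^N_{t_i},\hat\mu_s)$. However, by $(H1)$ and the uniform support bound, all PVFs $V[\mu]$ entering the argument are supported in the fixed compact set $\overline{B(0,\hat R)}\times\overline{B(0,C(1+\hat R))}\subset T\R^d$; hence, by $(H2)$, $V$ restricted to $\{\mu\in\cP_c(\R^d):\supp\,\mu\subset\overline{B(0,\hat R)}\}$ is uniformly continuous with some modulus $\omega$, i.e.\ $W^{T\R^d}(V[\mu],V[\nu])\le\omega(W^{\R^d}(\mu,\nu))$ on that set. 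Since $\nabla f(x)\cdot v$ is Lipschitz on the uniform support with constant $L_f$ depending only on $f$ and $\hat R$, Kantorovich--Rubinstein gives
\[
|\mathcal{R}^N_3|\le L_f\int_0^{t_k}\omega\!\left(W^{\R^d}(\hat\mu^N_{t_i(s)},\hat\mu^N_s)+W^{\R^d}(\hat\mu^N_s,\hat\mu_s)\right)ds\le L_f\int_0^T\omega\!\left(K\Delta t^N+W^{\R^d}(\hat\mu^N_s,\hat\mu_s)\right)ds,
\]
where $t_i(s)$ denotes the left endpoint of the sub-interval containing $s$. Since $\omega$ is continuous with $\omega(0)=0$, the uniform convergence $\hat\mu^N_s\to\hat\mu_s$ together with dominated convergence forces $\mathcal{R}^N_3\to 0$. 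Collecting the three estimates and letting $N\to\infty$ shows that $\boldsymbol{\hat\mu}$ satisfies the weak formulation of Definition \ref{def:sol}, concluding the proof.
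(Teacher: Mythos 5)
Your proposal is correct and follows essentially the same route as the paper's own proof: uniform support bound via $(H1)$ and discrete Gronwall, equi-Lipschitz-in-time plus Ascoli--Arzel\`a for compactness, a telescopic expansion with the barycentric velocity pulled back inside the fibre integral, and control of the PVF error term through the uniform continuity of $V$ (guaranteed by $(H2)$ on the compact set of measures supported in a fixed ball) rather than the Lipschitz bound $(H3)$. Your explicit modulus-of-continuity $\omega$ and dominated-convergence phrasing just makes precise the paper's final sentence ``by $(H2)$, $V$ is uniformly continuous on $B(0,K)$, hence the right hand-side vanishes.''
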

\begin{proof}
	We first prove that, given $R>0$ such that $\mathrm{supp}(\mu_0) \subset B(0,R)$, there exists $K=K(R,T)>0$ such that for $N$ sufficiently large
	\begin{equation}\label{cs}
	\supp (\hat\mu^N_{t_j}) \subset B(0,K), \quad \forall j \in \{ 0, \dots, N\} .
	\end{equation}
	Indeed
	\begin{align*}
	&\sup_{\mathrm{supp}(\hat\mu^N_{t_{j+1}})}|x|= \sup_{\mathrm{supp}(\hat\mu^N_{t_j})}|x + \Delta t \bar v_{t_j}(x)|\leq \sup_{\mathrm{supp}(\hat\mu^N_{t_j})}|x| + \Delta t \sup_{\mathrm{supp}(V[\hat\mu^N_{t_j}])}|v|\\
	&\leq \Delta t C + (1 + \Delta t C)\sup_{\mathrm{supp}(\hat\mu^N_{t_j})}|x|\leq \dots\leq \Delta t C \sum_{k=0}^{j} (1 + \Delta t C)^k + (1 + \Delta t C)^{j+1}\,R\\
	&\leq e^{Cj\Delta t} - 1+Re^{TC}\le e^{TC}(1+R)-1,
	\end{align*}
	where we used $(H1)$.
	We now prove the equicontinuity in time of the scheme: $t,s \in [0,T]$ and $N>>1$ such that $t - s>\Delta t^N$; then there exists $\{t^N_j\}=\{t_j\}$ such that $s < t_{j} < \hdots t_{j+k} < t.$
	Let $f \in \mathrm{Lip}_1(\R^d)$, then
	\begin{align*}\langle \hat\mu^N_t - \hat\mu^N_s , f \rangle &=  \langle \hat\mu^N_t - \hat\mu^N_{t_{j + k}}, f\rangle + \sum_{i = 1}^{k} \langle \hat\mu^N_{t_{j+i}} - \hat\mu^N_{t_{j +i-1}}, f\rangle + \langle \hat\mu^N_{t_{j}} - \hat\mu^N_s, f\rangle.
	\end{align*} 
	By construction,
	\begin{align*}
	\langle \hat\mu^N_{t_{j+1}}- \hat\mu^N_{t_j}, f\rangle &= \int_{T\R^d} \left(f(x + \Delta t^N \bar v_j) - f(x)\right)d\hat\mu^N_{t_j}\\
	&\leq \Delta t^N \int_{\R^d}|\bar v_j(x)|d\hat\mu^N_{t_j}(x)\leq \Delta t^N \int_{T\R^d} |v| dV[\hat\mu^N_{t_j}].
	\end{align*}
	Hence by $(H1)$ and  the equi-boundedness of supports, it follows 
	\begin{align*}
	\langle \hat\mu^N_{t_{j+1}}-\hat\mu^N_{t_j}, f\rangle & \leq \Delta t^N \int_{\R^d} C(1 + |x|) \,d\hat\mu^N_{t_j}(x)\leq \Delta t^N C(1 + K).
	\end{align*}
	Analogously $\langle \hat\mu^N_t - \hat\mu^N_{t_{j+k}}, f\rangle \leq |t - t_{j+k}| C(1 + K)$ and $\langle \hat\mu^N_{t_j}- \hat\mu^N_{s}, f\rangle \leq |t_j - s| C(1 + K)$.\\ Hence, taking the supremum for $f \in \mathrm{Lip}_1(\R^d),$ we have
	$$W^{\R^d}(\hat\mu^N_t, \hat\mu^N_s) \leq |t - s| C(1 + K).$$
	Since the support of $\hat\mu^N_{t_j}$  is bounded, uniformly in $N$, it immediately follows that the sequence $\{\boldsymbol{\hat\mu}^N\}_{N\in\N}$ have bounded first and second momentum  and therefore there exists    $\boldsymbol\mu \in C([0,T];\cP_c(\R^d))$ such that, up to a subsequence,
\begin{equation}\label{eq:Wconv}
\sup_{t \in [0,T]}W^{\R^d}(\hat\mu^N_t, \mu_t) \to 0, \textrm{ for } N \to +\infty.
\end{equation}
	We now prove that $\boldsymbol\mu$ is a solution of \eqref{MDE}. Given $f \in C^{\infty}_c(\R^d)$ such that $\|f\|_{C^2(\R^d)}\le 1$, we   rewrite
	\begin{equation}\label{proof:mss1}
	\begin{split}
	&	\langle \hat\mu^N_{t_k } - \hat\mu^N_0,f\rangle = \sum_{j=1}^{k} \langle \hat\mu^N_{t_j} - \hat\mu^N_{j -1}, f \rangle,\\
	&	=\sum_{j=1}^{k} \int_{\R^d}\left( f(x + \Delta t \bar v_{t_{j-1}}(x)) - f(x) \right) d\hat\mu^N_{t_{j-1}}\\
	&=\sum_{j=1}^{k}\int_{\R^d}\int_{t_{j-1}}^{{t_j}} \frac{d}{ds} f(x + (s - t_{j-1})\bar v_{t_{j-1}}(x)) ds d\hat\mu^N_{t_{j-1}}(x)\\
	&=\sum_{j=1}^{k}\int_{t_{j-1}}^{t_j} \int_{\R^d} \bar v_{t_{j-1}}(x)\cdot\nabla f(x + (s - t_{j-1})\bar v_{t_{j-1}}(x))d\hat\mu^N_{t_{j-1}}(x)ds.
	\end{split}
	\end{equation}
	We estimate 
	\begin{align*}
	\begin{split}
	&\left|\int_{t_{j-1}}^{t_j}  \left(\int_{\R^d}\bar v_{t_{j-1}}(x)\cdot\nabla f(x + (s - t_{j-1})\bar v_{t_{j-1}}(x))d\hat\mu^N_{t_{j-1}}(x) - \int_{T\R^d} v\cdot\nabla f(x)dV[\mu_s]\right)ds\right|\\
	&=	\left|\int_{t_{j-1}}^{t_j} \left( \int_{T\R^d} v\cdot \nabla f(x + (s - t_{j-1})\bar v_{t_{j-1}}(x)) dV[\hat\mu^N_{t_{j-1}}] - \int_{T\R^d} v\cdot\nabla f(x)dV[\mu_s] \right)ds\right|\\
	&\leq	\int_{t_{j-1}}^{t_j}  \left| \int_{T\R^d} v\cdot \nabla \left(f(x + (s - t_{j-1})\bar v_{t_{j-1}}(x)) - f(x)\right) dV[\hat\mu^N_{t_{j-1}}]\right|ds \\
	&+	\int_{t_{j-1}}^{t_j} \left| \int_{T\R^d} v\cdot \nabla f(x)d \left(V[\hat\mu^N_{t_{j-1}}] -  V[\mu_s]\right)\right|ds\\
	&\leq	\int_{t_{j-1}}^{t_j} (s - t_{j-1}) \int_{\R^d}\|D^2 f\|_{C^2} |\bar v_{t_{j-1}}(x)|^2 d\hat\mu^N_{t_{j-1}}(x) + \int_{t_{j-1}}^{t_j} W^{T\R^d}(V[\hat\mu^N_{t_{j-1}}],V[\mu_s])ds\\
	&\leq	\int_{t_{j-1}}^{t_j} (s - t_{j-1}) \int_{T\R^d} |v|^2 dV[\hat\mu^N_{t_{j-1}}](x)ds + \Delta t \sup_{t \in [ t_{j-1},t_{j}]} W^{T\R^d}(V[\hat\mu^N_{t_{j-1}}],V[\mu_t]).
	\end{split}
	\end{align*}
	Therefore, by  $(H1)$ and \eqref{cs}, we have 
	\begin{align*}
	\begin{split}
	&\left| \sum_{j=1}^{k}\int_{t_{j-1}}^{t_j} \int_{\R^d} \bar v_{t_{j-1}}(x)\nabla f(x + (s - t_{j-1})\bar v_{t_{j-1}}(x))d\hat\mu^N_{t_{j-1}}(x)ds-\int_0^T
	\int_{T\R^d} v\cdot\nabla f(x)dV[\mu_s](x,v)ds\right|\\
	&	\le \sum_{j=1}^{k}\left(\int_{t_{j-1}}^{t_j} (s - t_{j-1}) \int_{T\R^d} |v|^2 dV[\hat\mu^N_{t_{j-1}}](x)ds + \Delta t \sup_{t \in [t_{j-1},t_{j}]} W^{T\R^d}(V[\hat\mu^N_{t_{j-1}}],V[\mu_t])\right)\\
	&\leq	\sum_{j=1}^{k} \frac{\Delta t^2}{2} C' + T \sup_{ 1\leq j \leq k} \sup_{t \in [t_{j-1},t_{j}]}W^{T\R^d}(V[\hat\mu^N_{t_{j-1}}],V[\mu_t])\\
	&\leq	\Delta t T C' + T \sup_{ 1\leq j \leq k} \sup_{t \in [t_{j-1},t_{j}]}W^{T\R^d}(V[\hat\mu^N_{t_{j-1}}],V[\mu_t]).\\
	\end{split}
	\end{align*}
	By $(H2)$, $V$ is uniformly continuous on $B(0,K)$, hence we conclude that the right hand-side vanishes as $N\to+\infty$, thanks to \eqref{cs} and \eqref{eq:Wconv}.
	If $t^N_k\to t$ for $N\to \infty$, since the term on the left side in \eqref{proof:mss1}  converges to $\langle \mu_t - \mu_0, f\rangle$ by construction,  the previous estimate implies that
	$\mu$ is a weak solution to     \eqref{MDE}.
\end{proof}

\begin{remark}
For sake of completeness, similarly to Definition \ref{def:reprLAS}, we can provide an explicit formula to construct a probabilistic representation for the scheme introduced in Section \ref{sec:SDS} and for the mean velocity one, as follows.
Let $I_a^b:=[a\Delta t^N,b\Delta t^N]$, for $a,b\in\N$, $a\le b$, with $\Delta t^N$ and $t_k$ as in Section \ref{sec:SDS}. Denote respectively with $\boldsymbol{\bar\mu}^N=\{\bar\mu_t^N\}_{t\in[0,T]}$ and $\boldsymbol{\hat\mu}^N=\{\hat\mu_t^N\}_{t\in[0,T]}$ the schemes defined in \eqref{DTscheme} and \eqref{MVP}. For $k=0,\dots N-1$, we define
\begin{itemize}
\item[(A)] $\boldsymbol{\bar\eta}^N_{I_k^{k+1}}:=\bar\mu^N_{t_k}\oplus(\cdot -t_k)\cdot \nu_x[\bar\mu^N_{t_k}]\in\cP(\Gamma_{I_k^{k+1}})$, i.e.,
\[\boldsymbol{\bar\eta}^N_{I_k^{k+1}}=\int_{\R^d\times\R^d}\delta_{x+(\cdot-t_k)v}\,d\nu_x[\bar\mu^N_{t_k}](v)\,d\bar\mu^N_{t_k}(x)=\int_{T\R^d}\delta_{x+(\cdot-t_k)v}\,dV[\bar\mu^N_{t_k}](x,v);\]
\item[(B)] $\boldsymbol{\hat\eta}^N_{I_k^{k+1}}:=\hat\mu^N_{t_k}\oplus(\cdot -t_k)\cdot \delta_{\bar v_{t_k}(x)}\in\cP(\Gamma_{I_k^{k+1}})$, i.e.,
$\boldsymbol{\hat\eta}^N_{I_k^{k+1}}=\displaystyle\int_{\R^d}\delta_{x+(\cdot-t_k)\bar v_{t_k}(x)}\,d\hat\mu^N_{t_k}(x)$.
\end{itemize}
Now, we can build $\boldsymbol{\bar\eta}^N$ and $\boldsymbol{\hat\eta}^N$ by applying items $(2-3)$ of Definition \ref{def:reprLAS} and replacing item $(1)$ respectively with (A) and (B).

Following the same line as in Proposition \ref{prop:reprLASok}, we can prove that an analoguos result holds also for the semi-discrete in time Lagrangian scheme (or the mean velocity one) by replacing the LAS scheme $\boldsymbol\mu^N$ with the scheme $\boldsymbol{\bar\mu}^N$ (or $\boldsymbol{\hat\mu}^N$) and using the representation $\boldsymbol{\bar\eta}^N$ (or $\boldsymbol{\hat\eta}^N$) just provided.
\end{remark}

\section{Examples}\label{sec:ex}
In this section we present some examples in $d=1$ aimed at clarifying the work of the various proposed schemes, in particular we show that the LAS scheme and the mean velocity one in \eqref{MVP} may converge to different solutions of a MDE. We also exploit the result stated in Proposition \ref{prop:equivMDE-CE}.
For simplicity of computations and without loss of generality, let us set $\Delta_N=\Delta t^N=1/N$ as a time-step size for all the schemes. We denote with $\mathcal L=\mathcal L^1$ the $1$-dimensional Lebesgue measure.

\begin{example}[Splitting particle]\label{ex1} 

For every $\mu\in\mathscr P_c(\mathbb R)$ define:
\[
B(\mu)=\sup \left\{x:\mu(]-\infty,x])\leq \frac12\right\}.
\]
Set $\eta(\mu)=\mu(]-\infty,B(\mu)]) - \frac12$ so 
$\mu(\{B(\mu)\})=\eta(\mu)+\frac12-\mu(]-\infty,B(\mu)[)$.
We define $V[\mu]=\mu\otimes \nu_x[\mu]$, with
\begin{equation}\label{eq:PVF-Bar}
\nu_x[\mu]=\left\{
\begin{array}{ll}
\delta_{-1} & \textrm{if}\ x<B(\mu)\\
\delta_{1} & \textrm{if}\ x>B(\mu)\\
\frac{1}{\mu(\{B(\mu)\})}
\left(\eta\delta_{1}+
\left(\frac12-\mu(]-\infty,B(\mu)[)\right)\delta_{-1}\right) & \textrm{if}\ x=B(\mu),
\mu(\{B(\mu)\})>0.
\end{array}
\right.
\end{equation}
The solution obtained as limit of LASs, satisfies:
\[
\mu_t(A) = \mu_0((A\cap ]-\infty,B(\mu_0)-t[)+t)
+ \mu_0((A\cap ]B(\mu_0)+t,+\infty[)-t)
\]
\[
+\frac{1}{\mu_0(\{B(\mu_0)\})}
\left( \eta \delta_{B(\mu_0)+t}(A)+ (\frac12-\mu_0(]-\infty,B(\mu_0)[))\delta_{B(\mu_0)-t}(A)\right).
\]
In particular:
\begin{itemize}
\item[i)] The solution with $\mu_0=\delta_{x_0}$
is given by $\mu_t=\frac12 \delta_{x_0+t}+\frac12 \delta_{x_0-t}$, as illustrated in \textbf{Figure \ref{fig1}};
\item[ii)] The solution with $\mu_0=\chi_{[a,b]}\,\lambda$
(where $\chi$ is the characteristic function and $\lambda=\frac{1}{b-a}\mathcal L\llcorner_{[a,b]}$) 
is given by $\mu_t=\chi_{[a-t,\frac{a+b}{2}-t]}\lambda +
\chi_{[\frac{a+b}{2}+t,b+t]}\lambda$.
\end{itemize}
The same behavior is valid for the scheme \eqref{DTscheme} (see Theorem \ref{DTscheme_conv}). Moreover, it can be verified that the stationary solution $\{\delta_{x_0}\}_{t}$ is  the unique limit of the mean velocity scheme \eqref{MVP} when $\mu_0=\delta_{x_0}$, while this scheme has the same behavior of the LASs one when $\mu_0= \chi_{[a,b]}\,\lambda$. Hence the limit solution depends, in general, on the given approximation scheme.\\
Let us read these results in light of Proposition \ref{prop:equivMDE-CE} and Remark \ref{lack_uni}, in the easiest case when we start from $\mu_0=\delta_{x_0}$. First, we notice that the maps $x\mapsto\nu_x[\mu]$ and thus also
\begin{equation}\label{eq:wex}
x\mapsto w[\mu](x):=\int_{\pi_1^{-1}(x)}v\,d\nu_x[\mu](v)
\end{equation}
are not even continuous, thus uniqueness of solutions to the continuity equation $\partial_t\mu_t+\mathrm{div}(w[\mu_t]\mu_t)=0$, or equivalently to the MDE $\dot\mu_t=V[\mu_t]$, with $\mu_{|t=0}=\mu_0$, is not guaranteed. We can verify that both the curves $\boldsymbol\mu^1=\{\delta_{x_0}\}_t$ and $\boldsymbol\mu^2=\{\mu_t\}_t$, with $\mu_t=\frac12 \delta_{x_0+t}+\frac12 \delta_{x_0-t}$, selected by the schemes \eqref{DTscheme} and \eqref{MVP} are in fact solutions. Indeed, along $\boldsymbol\mu^1$, we have $B(\delta_{x_0})=x_0$ and $\eta(\delta_{x_0})=\frac{1}{2}$, thus $\nu_{x_0}[\delta_{x_0}]=\frac{1}{2}\delta_1+\frac{1}{2}\delta_{-1}$ and $w[\delta_{x_0}](x_0)=0$. Hence straightforwardly, \eqref{eq:contNL} with $w$ as in \eqref{eq:wex} and so the MDE are satisfied for $\boldsymbol\mu^1$.

Concerning $\boldsymbol\mu^2$, for $t> 0$ we have $B(\mu_t)=x_0+t$, $\eta(\mu_t)=\frac{1}{2}$, thus $\nu_{x_0-t}[\mu_t]=\delta_{-1}$ and $\nu_{x_0+t}[\mu_t]=\delta_{1}$. Hence,
\begin{align*}
\int_{T\mathbb R^d}(\nabla f(x)\cdot v)\,dV[\mu_t](x,v)&=\int_{\mathbb R^d}\nabla f(x) \cdot \left[\int_{\pi_1^{-1}(x)}v\,d\nu_x[\mu_t](v)\right]\,d\mu_t(x)\\
&=\frac{1}{2}\nabla f(x_0+t)-\frac{1}{2}\nabla f(x_0-t)\\
&=\frac{d}{dt}\int_{\mathbb R^d}f(x)\,d\mu_t(x),
\end{align*}
thus also $\boldsymbol\mu^2$ is a solution to the same MDE (or equivalently to \eqref{eq:contNL} with $w$ as in \eqref{eq:wex}).
\begin{figure}[h!]
\centering
\includegraphics[scale=0.65]{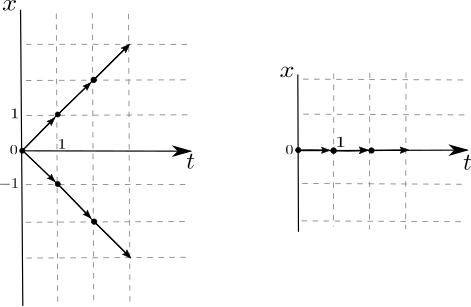}
\centering
\caption{LAS and semi-discrete Lagrangian schemes on the left for $N=1$. Mean-velocity scheme on the right. }
\label{fig1}
\end{figure}
\end{example}

\begin{example}\label{ex2} 
Let $V:\mathscr P(\mathbb R)\to\mathscr P(T\mathbb R)$ a PVF defined by $V[\mu]:=\mu\otimes\omega$, where $\omega:=\frac{1}{2}\left(\delta_{-1}+\delta_1\right)$, and let $\mu_0=\delta_0$. Then, both the LAS and \eqref{DTscheme} schemes give a binomial distribution at every time (see \textbf{Figure \ref{fig2}}) while, as in the previous example, the mean velocity scheme remains stationary. However by the Law of Large Numbers, as $N\to+\infty$ all the three schemes univocally converge to the constant solution $\boldsymbol\mu=\{\delta_0\}_{t\in[0,T]}$. We refer to \cite[Proposition 7.1]{Piccoli} for a formal proof.\\
This limit behavior is in line with Proposition \ref{prop:equivMDE-CE}, indeed the MDE under consideration is equivalent to the continuity equation driven by
\[w[\mu](x)=\int_{\pi_1^{-1}(x)}v\,d\omega(v)=0,\]
for $\mu$-a.e. $x\in\mathbb R^d$. By standard theory, the continuity equation driven by such a vector field, $\partial_t\mu_t+\mathrm{div}(w\mu_t)=0$, admits a unique solution. Hence, also the MDE as a unique solution, that is the stationary one. 
\begin{figure}[h!]
\centering
\includegraphics[scale=0.65]{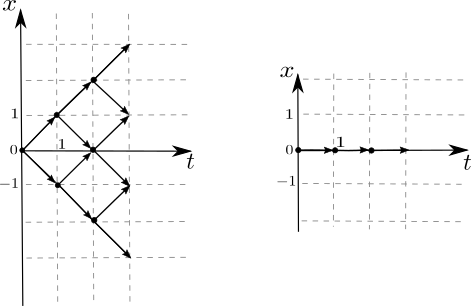}
\centering
\caption{LAS and semi-discrete Lagrangian schemes on the left. Mean-velocity scheme on the right. }
\label{fig2}
\end{figure}
\end{example}

\begin{example} 
Let $V:\mathscr P(\mathbb R)\to\mathscr P(T\mathbb R)$ a PVF defined by $V[\mu]:=\mu\otimes\omega$, where $\omega:=\frac{1}{2}\chi_{[-1,1]}\mathcal L=\frac{1}{2}\mathcal L\llcorner_{[-1,1]}$. Let $\mu_0=\delta_0$. Considering the LAS scheme, as illustrated in \textbf{Figure \ref{fig3.1}}, we notice that for $N=1$, the points $v_j$ in the discretized space of velocities such that $m_{ij}^v(V[\mu])\neq0$ are $v_0=-1$ and $v_1=0$, with equal weight. For $N=2$, we get $v_0=-1$, $v_1=-1/2$, $v_2=0$ and $v_3=1/2$, hence we start to give mass also to positive $x\in\mathbb R$, thus obtaining $\mu^{2}_{|t=1/2}=\frac{1}{4}\sum_{i=0}^3\delta_{(-1/2+i/4)}$ and $\mu^{2}_{|t=1}=\sum_{i=-2}^4\frac{1}{16}(4-|i-1|)\delta_{(-1/2+i/4)}$. 
\begin{figure}[h!]
\centering
\includegraphics[scale=0.65]{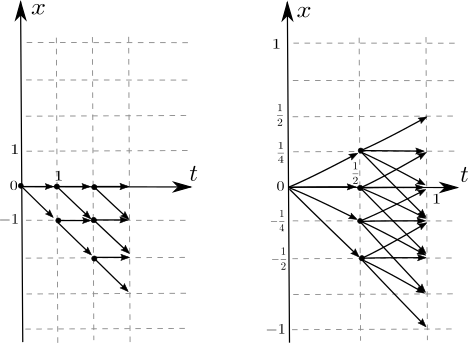}
\centering
\caption{LAS scheme: for N = 1 (left) and N=2 (right). }
\label{fig3.1}
\end{figure}
\\Coming to the semi-discrete Lagrangian scheme \eqref{DTscheme}, at the first time-step we get the uniform distribution on $[-1,1]$, while afterwards we obtain a normal distribution on $[-t,t]$ (see \textbf{Figure \ref{fig3.2}}). Reasoning in the same way as in the previous example, by the Law of Large Numbers, the LAS scheme and so also the semidiscrete Lagrangian one converge to the constant solution as $N\to\infty$ (see \cite[Proposition 7.1]{Piccoli}). Trivially, the mean-velocity scheme shares the same behavior.\\
We can get this conlcusion also through the very same discussion analyzed in Example \ref{ex2}, indeed by Proposition \ref{prop:equivMDE-CE}, we get uniqueness of solutions for this MDE, being equivalent to a continuity equation driven by $w=0$.
\begin{figure}[h]
\centering
\includegraphics[scale=0.65]{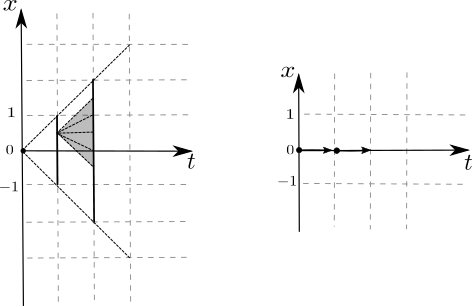}
\centering
\caption{Semi-discrete Lagrangian scheme on the left. Mean-velocity scheme on the right.}
\label{fig3.2}
\end{figure}
\end{example}

\begin{example} 
Let $V:\mathscr P(\mathbb R)\to\mathscr P(T\mathbb R)$ a PVF defined by $V[\mu]:=\mu\otimes\delta_{v(x)}$, where $v(x):=2 \sqrt{|x|}$, and $\mu_{|t=0}=\delta_{-1}$.
Recalling Definition \ref{def:sol}, by the atomic nature of the PVF $V$ over the fibers $T_x\R$, we deduce that the set of solutions to the MDE coincides with the set of distributional solutions of the continuity equation driven by the vector field $v(\cdot)$.
We can thus use the classical Superposition Principle \cite[Theorem 8.2.1]{AGS} to build the trajectories $\boldsymbol\mu$ by considering the integral solutions of the underlying ODE $\dot x(t)=2\sqrt{|x(t)|}$, with initial condition $x(0)=-1$.
\begin{figure}[h!]
\centering
\includegraphics[scale=0.65]{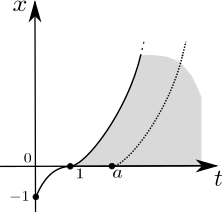}
\centering
\caption{Peano's brush referred to Example 4. }
\label{fig4.1}
\end{figure}
By classical theory we know that this system admits infinite solutions (called Peano's brush), such as the trivial one $x_\infty(t)=-(t-1)^2$ for $t\le 1$, $x_\infty(t)=0$ for $t\ge1$, but also the trajectories given by
\begin{equation}\label{eq:CauchyEuler}
x_a(t)=\begin{cases}
-(t-1)^2, & \textrm{if }t\le 1,\\
0, & \textrm{if }1\le t\le a,\\
(t-a)^2, & \textrm{if }t\ge a,
\end{cases}
\end{equation}
as $a$ varies in $[1,+\infty[$ (see \textbf{Figure \ref{fig4.1}}). In particular, among the infinite solutions of the MDE, we have $\boldsymbol\mu^1=\{\mu^1_t\}_{t\in[0,T]}$, with $\mu^1_t=\delta_{x_1(t)}$, and $\boldsymbol\mu^2=\{\mu^2_t\}_{t\in[0,T]}$, with $\mu^2_t=\delta_{x_\infty(t)}$.

Computing the LAS scheme for $N=1$ we get:
\begin{enumerate}
\item $\mu_0=\delta_{-1}$, hence $v(-1)=2$ which belongs to the velocity grid;
\item so we get $\mu^{N=1}_{|t=1}=\delta_{(-1+2)}=\delta_1$, hence $v(1)=2$ which belongs to the velocity grid;
\item so we get $\mu^{N=1}_{|t=2}=\delta_{(1+2)}=\delta_3$, hence $v(3)=2\sqrt{3}$ which does not belong to the velocity grid. Since $3<2\sqrt{3}<4$, then the point in the discretized space of velocities for $N=1$ such that $m_{ij}^v(V[\mu])\neq 0$ is $v_j=3$;
\item so we get $\mu^{N=1}_{|t=3}=\delta_{3+3}=\delta_6$, and so on.
\end{enumerate}
For $N=2$ and $N=3$, by performing similar computations we obtain the trajectories as represented in \textbf{Figure \ref{fig4.2}}. We can show that the LAS scheme converges to $\boldsymbol\mu^2$, and thus so does the semidiscrete Lagrangian scheme, up to subsequences.
Moreover we notice that, due to the atomic nature of $V$ over the fibers $T_x\R$, the mean velocity scheme \eqref{MVP} coincides with the semidiscrete Lagrangian one \eqref{DTscheme}. Thus, all the three schemes converge, up to subsequences to the same solution $\boldsymbol\mu^2$. Finally we point out that the semidiscrete Lagrangian scheme corresponds to the Euler method for the underlying ODE. We also notice that in our case, for all $N\in\mathbb N$ the grid intersects the critical point $(t,x)=(1,0)$ where we loose local Lipschtizianity of the vector field. If we perform a perturbation of the grid, shifting it w.r.t. the critical point, then the schemes will converge to $\boldsymbol\mu^1$, up to subsequences. 
\begin{figure}[h!]
\centering
\includegraphics[scale=0.65]{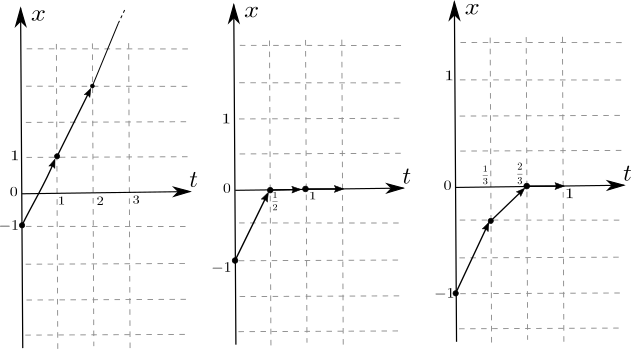}
\centering
\caption{LAS scheme for N= 1, 2, 3. }
\label{fig4.2}
\end{figure}
\end{example}

\medskip

The lack of uniqueness for the notion of weak solution given in Definition \ref{def:sol} and exploited in the examples is not surprising, as already observed in Remark \ref{lack_uni}. 
On the other side, by Proposition \ref{prop:equivMDE-CE}, if the mean velocity field is enough regular, the theory in \cite{AGS} would grant us the uniqueness of a solution of the MDE as push-forward of the initial condition.

\bigskip

\textbf{Acknowledgements}: G.C. thanks SBAI, ``Sapienza'' Universit{\`a} di Roma for its valuable hospitality during the preparation of the paper. G.C. is also indebted with University of Pavia where this research has been partially carried out, in particular G.C. has been supported by Cariplo Foundation and Regione Lombardia via project \emph{Variational Evolution Problems and Optimal Transport}, and by MIUR PRIN 2015 project \emph{Calculus of Variations}, together with FAR funds of the Department of Mathematics of the University of Pavia. G.C. thanks also the support of the INdAM-GNAMPA Project 2019 \emph{Optimal transport for dynamics with interaction (``Trasporto ottimo per dinamiche con interazione'')}. B.P. acknowledges the support of the National Science Foundation under the CPS SynergyGrant No. CNS-1837481.\\
The authors are grateful to the anonymous reviewers for their interesting comments and remarks which pushed to provide greater clarity to the presentation of the work.

%
%

\end{document}